\theoremstyle{plain}
\newtheorem{theorem}{Theorem}[paragraph]
\newtheorem{proposition}[theorem]{Proposition}
\newtheorem{lemma}[theorem]{Lemma}
\newtheorem{corollary}[theorem]{Corollary}
\theoremstyle{definition}
\newtheorem{example}[theorem]{Example}
\theoremstyle{remark}
\def\g{\boldsymbol}
\def\pr{{\rm Pr}}
\def\ci{\perp\!\!\!\perp}
\def\aci{\tilde \ci}
\begin{document}
\thispagestyle{empty}
\begin{center}
{\LARGE \textbf{One-Component Regular Variation \\ and Graphical Modeling of Extremes}\\}
\phantom{adsf}
{\large Adrien Hitz and Robin Evans\\}
{\large \textit{University of Oxford}\\}
\end{center}


\small
\setlength{\leftskip}{1cm}
\setlength{\rightskip}{1cm}

The problem of inferring the distribution of a random vector given that its norm is large requires modeling a homogeneous limiting density. We suggest an approach based on graphical models which is suitable for high-dimensional vectors.

We introduce the notion of one-component regular variation to describe a function that is regularly varying in its first component. We extend the representation and Karamata’s theorem to one-component regularly varying functions, probability distributions and densities, and explain why these results are fundamental in multivariate extreme-value theory. We then generalize Hammersley-Clifford theorem to relate asymptotic conditional independence to a factorization of the limiting density, and use it to model multivariate tails. 

\setlength{\leftskip}{0pt}
\setlength{\rightskip}{0cm}
\normalsize

\paragraph{Introduction}\label{par:intro} Regular variation describes the behavior of some functions when evaluated close to infinity \cite{Seneta1973}. In the multivariate case, regular variation of a measurable function $u$ on $\mathbb R^d\setminus\{\g 0\}$ with limit $v$ such that $u(\lambda \g 1),\, v(\lambda \g 1)>0,$ $\forall \lambda>0,$ is defined as
\begin{align}\label{eq:rv}
{u(t\g x)\over u(t\g 1)}\underset{t\rightarrow\infty}{\longrightarrow} v(\g x),\quad \forall \g x,
\end{align}
\cite{Resnick1987Extreme}. A striking result is that, in this case, the limit is homogeneous, i.e., there exists $\alpha\in\mathbb R$ such that $v(\lambda \g x)=\lambda^\alpha v(\g x),$ $\forall \lambda>0,$ $\forall \g x.$ 

We suggest an alternative definition: a function $u$ is called {\em regularly varying w.r.t.\ its first component\/} if
 $${ u(t  x,\g y)\over  u(t,\g 1)}\rightarrow v(x,\g y),$$ 
where $u$ and $v$ are measurable non-negative functions on $(0,\infty) \times \mathbb R^{d-1}$ such that $u(\cdot,\g 1)>0$ and $v(\cdot,\g 1)>0.$ One-component regular variation includes regular variation as a special case. For instance, $u$ is regularly varying on $\mathbb R^d_+\setminus\{\g 0\}$ if and only if $u\circ \varphi^{-1}$ is regularly varying in its first component, where $\varphi(\g x)=(||\g x||,\g x/||\g x||)$ and $\varphi^{-1}(r,\g\theta)=r\g\theta.$ In Section~\ref{par:ocrvFunc}, we generalize two important results in Karamata's theory --- i.e.\ the study of regular variation --- from the univariate to the multivariate case. The first is the representation theorem \cite{Bingham1989}, which states that any one-component regularly varying function can be written in a specific form. The second, Karamata's theorem, reveals relations between a one-component regularly varying function and its integral.  

Regular variation is, above all, the limit of a fraction. The specific form of the limit $v(x)=x^\alpha$ in the univariate case comes from the fact that the operation $\alpha\cdot x\mapsto x^\alpha$ is distributive over the division; when regular variation is applied in probability, the fraction corresponds to conditioning. The multiplication $t x,$ however, is an arbitrary choice of operation that can be replaced by a more general scaling \cite{Bingham2010}. Choosing $t\star x\mapsto T^{-1}\{T(t)T(x)\}$ for a diffeomorphism $T$ gives a limit of the form $T(x)^\alpha,$ thus extending regular variation to other decays than the power law. This is not just a trivial transformation, but enriches the representation and Karamata's theorems.

Extreme-value theory studies the distribution of a random vector $\g X$ in regions far from the origin. In \cite{BasrakPhD2000}, $\g X$ is called regularly varying if there exists a probability distribution $\nu$ on $C_{||\cdot||}=\{\g x\in \mathbb R^d: ||\g x|| \geq  1\}$ s.t.\ $\nu(\lambda C_{||\cdot|| })>0$ for some $\lambda>1,$ and
\begin{align}\label{eq:mrvVector}
t^{-1}\g X\mid ||\g X||\geq t  \overset{w}{\Rightarrow} \nu,
\end{align}
where $\overset{w}{\Rightarrow}$ denotes weak convergence. In this case, $\nu$ is homogeneous, i.e., there exists $\alpha>0$ such that $\nu(\lambda A)=\lambda^{-\alpha} \nu(A),$ for all Borel sets $A$ and $\lambda\geq 1.$ Basrak's exact definition involves vague convergence to a non-null Radon measure $\tilde \nu$ on $( \{-\infty\}\cup \mathbb R\cup \{\infty\})^d\setminus\{\g 0\}$ called the exponent measure. For simplicity and without losing generality, we stick to weak convergence between probability distributions throughout the paper. 

We say that a random vector $(X,\boldsymbol Y)$ is {\em regularly varying w.r.t.\ its first component,\/} with a limiting probability distribution $\mu$ on $[1,\infty)\times \mathbb R^{d-1}$ if 
$$(t^{-1}X,\,\boldsymbol Y) \mid X\geq t \overset{w}{\Rightarrow} \mu,$$
and $\mu_X\neq \delta_1,$ the Dirac mass on $1.$ Limits of random vectors with an extreme component have already been studied \cite{Heffernan2007,Resnick2014} and it is known that $\mu=P_\alpha \times H,$ where $P_\alpha$ is the Pareto distribution of index $\alpha>0$ and $H$ is a probability distribution on $\mathbb R^{d-1}.$ In particular, (\ref{eq:mrvVector}) is equivalent to one-component regular variation of $\varphi(\g X),$ in which case $H$ corresponds to the angular distribution \cite{Beirlant2004}.

In Section~\ref{par:ocrvDistr}, we adapt the representation and Karamata's theorems for random variables and, as a result, characterize homogeneous probability measures. In addition, we show that if $\g X$ admits a regularly varying probability density with non-null limit and if the sequence is for instance dominated by an integrable function, then 
\begin{align}\label{eq:convDensityIntro}
f_{t^{-1}\g X\,\mid\, ||\g X|| \geq t}\rightarrow f_{\g Y},
\end{align}
for a probability density $f_{\g Y}$ on $C_{||\cdot||},$ and thus $\g X$ is regularly varying. In this case, $f_{\g Y}$ is homogeneous of order $-\alpha-d.$ This extends Theorem 2.1 in \cite{de1987regular}, which implies regular variation of $\g X$ when, in addition to regular variation of $f_{\g X},$ the convergence is uniform, and the limit is bounded on $C_{||\cdot||}.$

Besides describing exceedances over a threshold, regular variation of $\g X$ is directly related to the behavior of maxima of independent copies $\g X^{(i)}.$ As explained in \cite{Resnick1987Extreme}, if $\g X$ is non-negative, then it is regularly varying if and only if there exists a sequence $a_n\rightarrow \infty$ such that $\max_{i=1,\ldots,n} a_n^{-1} \g X^{(i)} \overset{w}{\Rightarrow} G$ for a non-degenerate probability distribution $G$ on $\mathbb R_+^d;$ if the marginals of $G$ are not degenerated, then they are Fréchet distributed and $G$ is called Type II multivariate extreme-value distribution. The cumulative distribution function of $G$ is $\exp\{-\tilde \nu([\g 0,\g x]^c \},$ where $\tilde \nu$ is the exponent measure. The problem of modeling $\tilde \nu$ or the corresponding copula has received much attention in multivariate analysis. Unless $d$ is particularly small, the density of $G$ is approximated by the one of $\tilde \nu$ using a Taylor expansion, i.e., up to a constant, by $f_{\g Y}.$ Hence, modeling both maxima and exceedances of $\g X$ requires modeling a homogeneous density. 

The usual strategy is to write $f_{\g Y}(\g y)=||\g y||^{-\alpha-d} h(\g y/||\g y||)$ and model the angular density $h=dH$ satisfying some constraints to ensure that $\g Y$ has Pareto marginals. Suggested models for $h$ include the asymmetric logistic \cite{Tawn90}, the pairwise beta \cite{CooleyDavisNaveau}, its generalization \cite{BallaniSchlather}, mixtures of Dirichlet \cite{BoldiDavison} and the angular density of the Hüsler-Reiss exponent measure \cite{Engelke2015}.  However, they suffer limitations in high-dimensions as the number of parameters explodes and may lack flexibility to describe multivariate tails.


As a consequence of (\ref{eq:convDensityIntro}), any parametric probability density $f_{\g X}(\cdot;\g \theta)$ that is regularly varying with decay $T$ and satisfies the additional condition induces a homogeneous parametric density $f_{\g Y}(\cdot\,;\g\theta)$ in the limit. If the multivariate marginals and the censored densities of $f_{\g X}(\cdot;\g \theta)$ are available, we directly find the ones of $f_{\g Y}(\cdot;\g \theta)$ by passing to the limit. A complementary approach pursued from Section~\ref{par:me} is to simplify $f_{\g Y}$ using graphical models, which have been successful in reducing dimensionality \cite{wainwright2008graphical}. In Section~\ref{par:agm}, we define a new notion called {\em asymptotic conditional independence\/} and we generalize Hammersley-Clifford for sequence of densities that factorize in the limit w.r.t.\ a graph. 

Section~\ref{par:agme} explains how to use asymptotic graphical models for multivariate exceedances. We consider a sequence $\g X_t^C$ based on censoring the marginals of $\g X$ whose absolute values fall below $t.$ Under (\ref{eq:convDensityIntro}) and $f_{\g Y}>0,$ the generalized Hammersley-Clifford theorem states that some marginals of $\g X_{t}^C$ are asymptotically conditionally independent given the rest of the vector if and only if the censored density of $\g Y$ is determined by lower dimensional marginals $\g Y_C,$ where $C$ are the cliques of a graph. We show that $\g Y_C$ corresponds to the limit of $\g X_C,$ enabling inference to be performed on each clique separately once the graph has been selected. 


\section{One-Component Regular Variation for Functions}\label{par:ocrvFunc}  Consider the commutative and associative operation 
\begin{align}\label{eq:defStar}
x\star y:=T^{-1}\{T(x)T(y)\},\quad x,y\in E=[e_0,e_1),
\end{align}
where $T:E\rightarrow [1,\infty)$ is a diffeomorphism, for $e_0\in \mathbb R,$ $e_1\in \mathbb R\cup \{\infty\}.$ When $E=[0,\infty),$ possible operations include multiplication ($T\equiv\text{id}$), addition ($T\equiv\exp$) and $x\star y=||x,y||_p$ for $p>0,$ $T(x)=\exp(x^p).$ When $E=[0,e_1)$ and $e_1<\infty,$ $T(x)=(1-e_1^{-1} x)^{-1}$ gives $x\star y=x+y-e_1^{-1} xy.$ We call a positive and measurable function $u$ on $E$ {\em regularly varying with decay $T$\/} and limit $v$ if
\begin{align}\label{eq:T-RV}
{u(t \star x)\over u(t\star e_0)}\rightarrow v(x)>0,
\end{align}
where the arrow stands for pointwise convergence on the entire domain of definition as $t \uparrow e_1.$ It follows that $u\circ T^{-1}$ is regularly varying, and thus $v(x)=T(x)^\alpha$ for some $\alpha \in \mathbb R.$ The convergence moreover is uniform \cite{Resnick1987Extreme}. We denote (\ref{eq:T-RV}) by $u\in T\text{-RV}_{\alpha}.$ 

Let us extend this notion to the multivariate setting. Consider two measurable and non-negative functions $u$ and $v$ on $E\times \mathbb R^{d-1}$ such that $u(\cdot,\g 1)>0,$ $v(\cdot,\g 1)>0,$ and a non-negative function $h$ on $\mathbb R^{d-1}$ satisfying $h(\g 1)=1.$ We call $\g 1$ the pivot and its choice is arbitrary. 

\begin{lemma}[Characterization]\label{le:equivRVx}
The following are equivalent.
\begin{enumerate}[(i)]
\item \label{item:charRVxRV} ${ u(t \star  x,\g y)/  u(t\star e_0,\g 1)}\rightarrow v(x,\g y),$
\item  \label{item:charRVxCond1}  $u(\cdot,\g 1)\in T\text{-RV}_{\alpha}$ and ${u(t,\g y)/ u(t, \g 1)}\rightarrow h(\g y),$ 
\item \label{item:charRVxCondY}  $u(\cdot,\g y)\in T\text{-RV}_{\alpha},$ $\forall \g y$ s.t.\ $h(\g y)>0,$ and ${u(t,\g y)/ u(t,\g 1)}\rightarrow h(\g y),$ 
\item \label{item:charRVxAlphaH}  ${ u(t \star x,\g y)/  u(t\star e_0,\g 1)}\rightarrow  T(x)^{\alpha}\;h(\g y)$ uniformly in $x.$ 
\end{enumerate}
\end{lemma}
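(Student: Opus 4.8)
The plan is to prove the chain (\ref{item:charRVxRV}) $\Rightarrow$ (\ref{item:charRVxCond1}) $\Rightarrow$ (\ref{item:charRVxCondY}) $\Rightarrow$ (\ref{item:charRVxAlphaH}) $\Rightarrow$ (\ref{item:charRVxRV}), reading off $\alpha$ and $h$ once from (\ref{item:charRVxRV}) and keeping them fixed throughout. The key preliminary is that, $T$ being a diffeomorphism of $[e_0,e_1)$ onto $[1,\infty)$, it is increasing with $T(e_0)=1$ and $T(t)\uparrow\infty$ as $t\uparrow e_1$; hence $t\star e_0=T^{-1}\{T(t)\cdot 1\}=t$, so the normalising term $u(t\star e_0,\g 1)$ in (\ref{item:charRVxRV}) and (\ref{item:charRVxAlphaH}) is simply $u(t,\g 1)$, and $T(t\star x)=T(t)T(x)\uparrow\infty$, so $t\star x\uparrow e_1$ for every fixed $x\in E$. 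This last point is what lets us substitute $t\star x$ for $t$ in any convergence statement holding ``as $t\uparrow e_1$''.

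\emph{(\ref{item:charRVxRV}) $\Rightarrow$ (\ref{item:charRVxCond1}).} Taking $\g y=\g 1$ in (\ref{item:charRVxRV}) gives $u(t\star x,\g 1)/u(t,\g 1)\to v(x,\g 1)$, which is (\ref{eq:T-RV}) for $u(\cdot,\g 1)$ (its limit being positive by hypothesis); hence $u(\cdot,\g 1)\in T\text{-RV}_\alpha$ for some $\alpha$ and $v(x,\g 1)=T(x)^\alpha$. Taking $x=e_0$ gives $u(t,\g y)/u(t,\g 1)\to v(e_0,\g y)=:h(\g y)$, and $h(\g 1)=1$ automatically. \emph{(\ref{item:charRVxCond1}) $\Rightarrow$ (\ref{item:charRVxCondY}).} The second clause is unchanged; for the first, fix $\g y$ with $h(\g y)>0$ (so $u(t,\g y)>0$ for $t$ near $e_1$) and write
\begin{align*}
\frac{u(t\star x,\g y)}{u(t\star e_0,\g y)}
&=\frac{u(t\star x,\g y)}{u(t\star x,\g 1)}\cdot\frac{u(t\star x,\g 1)}{u(t,\g 1)}\cdot\frac{u(t,\g 1)}{u(t,\g y)}\\
&\longrightarrow h(\g y)\cdot T(x)^\alpha\cdot h(\g y)^{-1}=T(x)^\alpha,
\end{align*}
using $t\star x\uparrow e_1$ in the first ratio, $u(\cdot,\g 1)\in T\text{-RV}_\alpha$ in the second, and (\ref{item:charRVxCond1}) in the third; hence $u(\cdot,\g y)\in T\text{-RV}_\alpha$.

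\emph{(\ref{item:charRVxCondY}) $\Rightarrow$ (\ref{item:charRVxAlphaH}).} When $h(\g y)>0$, write $u(t\star x,\g y)/u(t,\g 1)=\bigl(u(t\star x,\g y)/u(t\star e_0,\g y)\bigr)\bigl(u(t,\g y)/u(t,\g 1)\bigr)$: the first ratio tends to $T(x)^\alpha$ uniformly in $x$ by the univariate uniform convergence theorem for $T\text{-RV}_\alpha$ recalled after (\ref{eq:T-RV}), and since this family has bounded limit while the second factor is a convergent constant, the product tends to $T(x)^\alpha h(\g y)$ uniformly (over the sets on which the univariate theorem delivers uniformity, i.e.\ those on which $T(x)^\alpha$ is bounded). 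When $h(\g y)=0$, write instead $u(t\star x,\g y)/u(t,\g 1)=\bigl(u(t\star x,\g y)/u(t\star x,\g 1)\bigr)\bigl(u(t\star x,\g 1)/u(t,\g 1)\bigr)$: the second ratio is uniformly bounded and tends to $T(x)^\alpha$, while the first tends to $0$ uniformly in $x$ on each compact $K\subset E$, because monotonicity of $T$ forces $\{t\star x:x\in K\}\subseteq\{s\in E:T(s)\ge T(t)\min_{x\in K}T(x)\}$, which for $t$ close to $e_1$ lies inside any prescribed interval $[s_0,e_1)$ on which $u(\cdot,\g y)/u(\cdot,\g 1)$ is uniformly small. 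Finally (\ref{item:charRVxAlphaH}) $\Rightarrow$ (\ref{item:charRVxRV}) is immediate with $v(x,\g y)=T(x)^\alpha h(\g y)$.

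The only genuinely delicate step is the $h(\g y)=0$ case just above: promoting the pointwise-in-$t$ decay of $u(\cdot,\g y)/u(\cdot,\g 1)$ to decay that is uniform over the moving window $\{t\star x:x\in K\}$. The rest is repeated application of the univariate $T$-RV facts (existence of $\alpha$, the form $T(x)^\alpha$, uniform convergence) together with elementary bookkeeping for products of convergent ratios, plus attention to the domain on which ``uniformly in $x$'' is to be understood.
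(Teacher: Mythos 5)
Your proof is correct and follows essentially the same route as the paper: the same implication chain (i)$\Rightarrow$(ii)$\Rightarrow$(iii)$\Rightarrow$(iv)$\Rightarrow$(i) with the same telescoping factorizations of the ratios, plus some extra care (the observation $t\star e_0=t$, the explicit $h(\g y)=0$ case in (iii)$\Rightarrow$(iv), and attention to the domain of uniformity) that the paper leaves implicit.
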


\begin{proof} 
 (\ref{item:charRVxRV}) $\Rightarrow$
 (\ref{item:charRVxCond1}): to derive the first condition, set $\g y=\g 1;$ for the second, $x=e_0.$
 (\ref{item:charRVxCond1})$\Rightarrow$(\ref{item:charRVxCondY}): let $\g y$ s.t.\ $h(\g y)>0.$ Since $u(t\star e_0,\g y)/u(t\star e_0,\g 1)\rightarrow h(\g y),$ $u(t\star e_0,\g y)>0$ for $t$ sufficiently large. It follows that 
$${u(t\star x,\g y)\over  u(t\star e_0,\g y)}={u(t\star x,\g y)\over u(t\star x,\g 1)}{u(t\star x,\g 1)\over u(t\star e_0,\g 1)}{u(t\star e_0,\g 1)\over u(t\star e_0,\g y)}\rightarrow T(x)^\alpha.$$
 (\ref{item:charRVxCondY}) $\Rightarrow$ (\ref{item:charRVxAlphaH}): as $h(\g 1)>0,$
 $${u(t\star x,\g y)\over  u(t\star e_0,\g 1)}={u(t\star x,\g y)\over u(t\star x,\g 1)} {u(t\star x,\g 1)\over u(t\star e_0,\g 1)}\rightarrow T(x)^{\alpha}h(\g y),$$
 and the convergence is uniform in $x.$ 
 (\ref{item:charRVxAlphaH}) $\Rightarrow$ (\ref{item:charRVxRV}): clear. 
\end{proof}

 We say that $u$ is {\em regularly varying in its first component} if (\ref{item:charRVxRV})--(\ref{item:charRVxAlphaH}) hold, written $u\in T\text{-RV}^x_{\alpha}(h),$ or simply $\text{RV}^x_{\alpha}(h)$ when $T\equiv \text{id}.$ Condition $u(\cdot,\g y)\in\text{RV}_{\alpha},$ $\forall \g y,$ corresponds to the uniform regular variation of \cite{Meerschaert1993} if it is assumed further that the convergence is uniform in $\g y.$ 

We now generalize the representation theorem for univariate regular variation (see \cite{Bingham1989} in the case $T\equiv \text{id},$ \cite{Jaros} otherwise).

\begin{theorem}[Multivariate Representation Theorem] \label{thm:reprRVx}
 It holds $u\in T\text{-RV}^x_{\alpha}(h)$ if and only if
\begin{align*}
u(x,\g y)=c(x) \exp\left\{\int_{e_0}^x \alpha(z) {T'(z)\over T(z)}  dz \right\} q(x,\g y), 
\end{align*}
for some measurable functions s.t.\ $c(t)\rightarrow c>0,$ $\alpha(t)\rightarrow \alpha,$ $q(t,\g y)\rightarrow h(\g y)$ as $t\uparrow e_1.$ 
\end{theorem}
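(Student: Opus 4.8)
The plan is to derive both implications from the classical univariate representation theorem, using Lemma~\ref{le:equivRVx} to decouple the dependence on $x$ from that on $\g y$. The univariate part comes from applying that theorem to $u(\cdot,\g 1)$ after transporting the operation $\star$ to ordinary multiplication through $T$, while the $\g y$-dependence is absorbed into the factor $q$; concretely I would argue through the equivalence (\ref{item:charRVxRV})~$\Leftrightarrow$~(\ref{item:charRVxCond1}).

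For the ``only if'' direction, suppose $u\in T\text{-RV}^x_\alpha(h)$. By Lemma~\ref{le:equivRVx} this yields $u(\cdot,\g 1)\in T\text{-RV}_\alpha$ and $u(t,\g y)/u(t,\g 1)\rightarrow h(\g y)$. As observed after the definition of $T\text{-RV}_\alpha$, the function $u(\cdot,\g 1)\circ T^{-1}$ is then regularly varying in the classical univariate sense, so I would invoke the univariate representation theorem (\cite{Bingham1989} for $T\equiv\text{id}$, \cite{Jaros} in general) to write $u(T^{-1}(s),\g 1)=\tilde c(s)\exp\{\int_1^s\tilde\varepsilon(r)r^{-1}\,dr\}$ with measurable $\tilde c(s)\rightarrow c>0$ and $\tilde\varepsilon(r)\rightarrow\alpha$. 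Setting $s=T(x)$ and substituting $r=T(z)$ --- valid since $T$ is an increasing diffeomorphism with $T(e_0)=1$ and $T(x)\uparrow\infty$ --- turns this into $u(x,\g 1)=c(x)\exp\{\int_{e_0}^x\alpha(z)T'(z)/T(z)\,dz\}$ with $c(x):=\tilde c(T(x))\rightarrow c$ and $\alpha(z):=\tilde\varepsilon(T(z))\rightarrow\alpha$, both measurable. Finally I would set $q(x,\g y):=u(x,\g y)/u(x,\g 1)$, which is measurable since $u$ is and $u(\cdot,\g 1)>0$, and which satisfies $q(t,\g y)\rightarrow h(\g y)$ by the second clause above; multiplying through gives the asserted form.

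For the ``if'' direction I would verify condition (\ref{item:charRVxCond1}) of Lemma~\ref{le:equivRVx} directly. The $\g y$-ratio is immediate, since neither $c(t)$ nor the exponential depends on $\g y$:
\begin{align*}
\frac{u(t,\g y)}{u(t,\g 1)}=\frac{q(t,\g y)}{q(t,\g 1)}\rightarrow\frac{h(\g y)}{h(\g 1)}=h(\g y).
\end{align*}
For $u(\cdot,\g 1)\in T\text{-RV}_\alpha$, using $t\star e_0=t$ and $T(t\star x)=T(t)T(x)$,
\begin{align*}
\frac{u(t\star x,\g 1)}{u(t\star e_0,\g 1)}=\frac{c(t\star x)}{c(t)}\cdot\frac{q(t\star x,\g 1)}{q(t,\g 1)}\cdot\exp\left\{\int_t^{t\star x}\alpha(z)\frac{T'(z)}{T(z)}\,dz\right\},
\end{align*}
where the first two factors tend to $1$ because $t\star x\uparrow e_1$. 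The substitutions $w=T(z)$ and then $w=T(t)\sigma$ recast the integral as $\int_1^{T(x)}\alpha(T^{-1}(T(t)\sigma))\sigma^{-1}\,d\sigma$, which converges to $\alpha\log T(x)$ by dominated convergence (the integrand is bounded for $t$ near $e_1$ since $\alpha(\cdot)\rightarrow\alpha$). Hence the ratio tends to $T(x)^\alpha$, and Lemma~\ref{le:equivRVx} delivers $u\in T\text{-RV}^x_\alpha(h)$.

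I do not expect a genuine obstacle here: once Lemma~\ref{le:equivRVx} has isolated the univariate content, both directions reduce to bookkeeping. The step deserving the most attention is the change of variables $w=T(z)$ (equivalently $s=T(x)$), which is exactly what promotes the ordinary representation theorem to its $T$-version and which has to be run in both directions; a lesser point is to check that $c$, $\alpha(\cdot)$ and $q$ may all be taken measurable, which follows from measurability of $u$, strict positivity of $u(\cdot,\g 1)$, and continuity of $T$.
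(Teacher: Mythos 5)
Your proof is correct and follows essentially the same route as the paper's: the forward direction transports the univariate representation theorem through $T$ and sets $q=u(\cdot,\g y)/u(\cdot,\g 1)$ via Lemma~\ref{le:equivRVx}, and the reverse direction rests on the same change of variables showing $\int_{t}^{t\star x}\alpha(z)T'(z)/T(z)\,dz\rightarrow\alpha\log T(x)$ (the paper substitutes $z=t\star\bar z$ directly, you go through $w=T(z)$, which is the same computation). The only cosmetic difference is that you verify condition (\ref{item:charRVxCond1}) of the lemma in the reverse direction where the paper concludes condition (\ref{item:charRVxRV}) directly; these are equivalent.
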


As a consequence, $\forall \epsilon>0,$ $\exists c_1,c_2>0$ such that 
\begin{align}\label{eq:boundT}
c_1 T(x)^{\alpha-\epsilon} h(\g y) < u(x,\g y)< c_2 T(x)^{\alpha+\epsilon} h(\g y),
\end{align} 
for $x$ sufficiently large and $\g y$ satisfying $h(\g y)\neq 0.$

\begin{proof} For the direct implication, write $u(x,\g y)=u(x,\g y)/u(x,\g 1)\, u(x,\g 1).$ Lemma~\ref{le:equivRVx} gives $q(t,\g y):=u(t,\g y)/u(t,\g 1)\rightarrow h(\g y)$ and $u(x ,\g 1)\in T\text{-RV}_{\alpha}.$ The conclusion follows by applying the representation theorem on the regularly varying function $u\{T^{-1}(x),\g 1\}.$ For the reverse, consider $z=t\star\bar z$ and $dz=T(t)T'(\bar z)/T'(t\star \bar z)$ to find 
$$\int_{t\star e_0}^{t\star x}\alpha(z){T'(z)/ T(z)} dz=\int_{e_0}^{x}\alpha(t\star \bar z){T'(\bar z)/ T(\bar z)} d\bar z\rightarrow \alpha \log T(x),$$
and thus $u(t\star x,\g y)/u(t\star e_0,\g 1)\rightarrow T(x)^\alpha h(\g y).$
\end{proof}

Another important result is Karamata's theorem, which relates the regular variation of a univariate function to that of its integral \cite{Bingham1989}. We generalize it for one-component regular variation, only treating the case of a negative power index $-\alpha$ for $\alpha>0.$ Suppose that $\bar U(x,\g y):= \int_{x}^\infty u(z,\g y)dz$ exists $\forall x,\g y$ (it exists for $e_0$ sufficiently large if $u(x,\g y)/T'(x)$ $\in T\text{-RV}^x_{-\alpha-1}(h)$ as a consequence of (\ref{eq:boundT})). 

\begin{theorem}[Multivariate Karamata's Theorem]\label{thm:karamataRVx}
It holds 
\begin{align*}
{u(x,\g y)\over T'(x)} \in T\text{-RV}^x_{-\alpha-1}(h)\quad \iff \quad {T(t)\over T'(t)} {u(t, \g y)\over  \bar U(t,\g 1)} \rightarrow \alpha\,  h(\g y).
\end{align*}
 In this case, $\bar U\in T\text{-RV}^x_{-\alpha}(h),$ and its representation has coefficient $c(x)\equiv \bar U(e_0,\g 1).$ 
\end{theorem}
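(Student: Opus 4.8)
The plan is to reduce the whole statement to the classical univariate Karamata theorem (its direct and converse parts) via the change of variables that linearizes $\star$, exactly as was done for Theorem~\ref{thm:reprRVx}. Writing $\psi:=T^{-1}:[1,\infty)\to E$ and $w(s,\g y):=u(\psi(s),\g y)/T'(\psi(s))=u(\psi(s),\g y)\,\psi'(s)$, the substitution $z=\psi(s)$ turns $\bar U(x,\g y)=\int_x^\infty u(z,\g y)\,dz$ into $\bar U(\psi(r),\g y)=\int_r^\infty w(s,\g y)\,ds=:W(r,\g y)$. By the definition of $\star$-regular variation, $u(\cdot,\g y)/T'(\cdot)\in T\text{-RV}^x_{-\alpha-1}(h)$ is equivalent to $w\in\text{RV}^x_{-\alpha-1}(h)$ in the ordinary multiplicative sense, which by Lemma~\ref{le:equivRVx} (with $T\equiv\text{id}$) means $w(\cdot,\g 1)\in\text{RV}_{-\alpha-1}$ and $w(r,\g y)/w(r,\g 1)\to h(\g y)$; and evaluating the right-hand condition at $t=\psi(r)$, using $T(\psi(r))=r$, turns it into $r\,w(r,\g y)/W(r,\g 1)\to\alpha\,h(\g y)$. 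So after the substitution the equivalence to be proved is purely about $w$ and its tail integral $W$.

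For the forward direction I would factor $r\,w(r,\g y)/W(r,\g 1)=\{w(r,\g y)/w(r,\g 1)\}\cdot\{r\,w(r,\g 1)/W(r,\g 1)\}$: the first factor tends to $h(\g y)$ by hypothesis, and the second tends to $\alpha$ by the direct part of Karamata's theorem applied to $w(\cdot,\g 1)\in\text{RV}_{-\alpha-1}$ (here $\alpha>0$ guarantees $W(\cdot,\g 1)$ is finite and lies in $\text{RV}_{-\alpha}$). For the converse I would first set $\g y=\g 1$ to get $r\,w(r,\g 1)/W(r,\g 1)\to\alpha$; since $w(\cdot,\g 1)$ is positive (as $u(\cdot,\g 1)>0$ and $T'>0$) and locally integrable (since $\bar U$ exists) and $\alpha>0$, the \emph{converse} part of Karamata's theorem yields $w(\cdot,\g 1)\in\text{RV}_{-\alpha-1}$, i.e.\ $u(\cdot,\g 1)/T'(\cdot)\in T\text{-RV}_{-\alpha-1}$; dividing the two limits then recovers $w(r,\g y)/w(r,\g 1)\to h(\g y)$, and these two facts together are exactly $w\in\text{RV}^x_{-\alpha-1}(h)$. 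I expect this converse step to be the main obstacle: the rest is ratio bookkeeping, but one genuinely needs the converse of Karamata, and some care is needed to verify that its positivity and local-integrability hypotheses survive the $\psi$-substitution.

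For the closing assertions, $\bar U(\cdot,\g 1)\in T\text{-RV}_{-\alpha}$ (equivalently $W(\cdot,\g 1)\in\text{RV}_{-\alpha}$) is again the direct Karamata theorem, and the $\g y$-coordinate follows because $w(s,\g y)/w(s,\g 1)\to h(\g y)$ forces $W(r,\g y)\sim h(\g y)\,W(r,\g 1)$ when $h(\g y)>0$ (asymptotically equivalent integrands have asymptotically equivalent convergent tail integrals) and $W(r,\g y)/W(r,\g 1)\to 0$ when $h(\g y)=0$ (bound $w(\cdot,\g y)\le\epsilon\,w(\cdot,\g 1)$ on a tail and integrate); hence $\bar U(t,\g y)/\bar U(t,\g 1)\to h(\g y)$ and $\bar U\in T\text{-RV}^x_{-\alpha}(h)$ by Lemma~\ref{le:equivRVx}. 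To pin down the representation coefficient, I would note that $\bar U(\cdot,\g 1)=\bar U(e_0,\g 1)-\int_{e_0}^{\cdot}u(z,\g 1)\,dz$ is absolutely continuous and bounded below by a positive constant on each compact subset of $E$, so $\log\bar U(\cdot,\g 1)$ is absolutely continuous with $\tfrac{d}{dz}\log\bar U(z,\g 1)=-u(z,\g 1)/\bar U(z,\g 1)=\alpha(z)\,T'(z)/T(z)$ a.e., where $\alpha(z):=-\tfrac{T(z)}{T'(z)}\tfrac{u(z,\g 1)}{\bar U(z,\g 1)}\to-\alpha$ by the right-hand limit at $\g y=\g 1$ established above. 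Then
\begin{align*}
\bar U(x,\g y)=\bar U(e_0,\g 1)\,\exp\!\Big\{\int_{e_0}^x\alpha(z)\,\tfrac{T'(z)}{T(z)}\,dz\Big\}\,\frac{\bar U(x,\g y)}{\bar U(x,\g 1)},
\end{align*}
and since $\bar U(x,\g y)/\bar U(x,\g 1)\to h(\g y)$, this is a representation in the form of Theorem~\ref{thm:reprRVx} with $c(x)\equiv\bar U(e_0,\g 1)$.
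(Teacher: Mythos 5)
Your proof is correct and takes essentially the same route as the paper's: a reduction via $T^{-1}$ to the univariate Karamata theorem, whose direct half the paper re-derives inline (change of variable $z=t\star\tilde z$ plus the bound (\ref{eq:boundT}) to exchange limit and integral) and whose converse half is exactly the paper's integration of $u(z,\g 1)/\bar U(z,\g 1)=\alpha(z)T'(z)/T(z)$, combined with Lemma~\ref{le:equivRVx} to split off the ratio $u(t,\g y)/u(t,\g 1)\to h(\g y)$. The only differences are presentational: you invoke the classical theorem as a black box where the paper reproves it, and you get $\bar U\in T\text{-RV}^x_{-\alpha}(h)$ from a sandwich bound on the tail integrals rather than by dominated convergence.
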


\begin{proof} 
To prove the direct implication, use the change of variable $z=t\star \tilde z$ to find
\begin{align*}
{T(t\star e_0)\over T'(t\star e_0)}{u(t\star e_0,\g y)\over \int_{t\star e_0}^{\infty} u(z,\g 1)dz} & = 
  \left\{ \int_{e_0}^\infty {u(t\star \tilde z,\g 1)\over u(t\star e_0,\g 1)}{T'(t\star e_0)\over T'(t\star \tilde  z)} 
T'(z)
d \tilde z\right\}^{-1} {u(t\star e_0,\g y)\over u(t\star e_0,\g 1)}\\
& \rightarrow  \left\{\int_{e_0}^\infty  T(\tilde z)^{-\alpha-1} T'(\tilde z)d  \tilde z\right\}^{-1} h(\g y) =\alpha h(\g y) ,
\end{align*} 
because $u(\cdot,\g 1)/T'(\cdot)\in T\text{-RV}_{-\alpha-1}$ and $u(t\star e_0,\g y)/u(t\star e_0,\g 1)\rightarrow h(\g y)$ from Lemma~\ref{le:equivRVx}. Limit and integral can be exchanged because $c T(z)^{-\alpha-1+\epsilon}T'(z)$ dominates the integrand for some $c>0,$ $\epsilon\in (0,\alpha)$ and $t$ sufficiently large thanks to (\ref{eq:boundT}).

For the reverse implication, let $\alpha(x):=$ $\{u(x,\g 1)/\bar U(x,\g 1) \}/$ $\{T'(x)/T(x)\},$ which satisfies $\alpha(t)\rightarrow \alpha.$ Integrate $u(z,\g 1)/\bar U(z,\g 1)=$ $\alpha(z) T'(z)/T(z)$ from both sides between $e_0$ and $x$ to obtain 
$\bar U(x,\g 1) = \bar U(e_0,\g 1) \exp\left\{- \int_{e_0}^x \alpha(z) {T'(z)/ T(z)} dz \right\}.$
This is the representation of a $T$-regularly varying function with $c(x)=\bar U(e_0,\g 1)$ thanks to Theorem \ref{thm:reprRVx}. Hence, ${T'(t\star e_0)/ T'(t\star x)}\, {u(t\star x,\g y)/ u(t\star e_0,\g 1)}$ equals
\begin{multline*}
 {T'(t\star e_0) \, \bar U(t\star e_0,\g 1)\over T(t\star e_0)\, u(t\star e_0,\g 1)}\, {T(t\star x) \, u(t\star x,\g y)\over T'(t\star x)\,\bar U(t\star x,\g 1) }\,{T(t\star e_0)\over T(t\star x)}  \,{\bar U(t\star x,\g 1)\over \bar U(t\star e_0,\g 1)} 
\rightarrow  T(x)^{-\alpha-1} h(\g y),
\end{multline*}
which ends the proof of the equivalence in (\ref{thm:karamataRVx}). Moreover, $\bar U \in T\text{-RV}_{\alpha}(h)$ because
\begin{multline*}
{\bar U(t\star x,\g y)\over \bar U(t\star e_0,\g 1)} = \bar U(t\star e_0,\g 1)^{-1} \,\int_{t\star x}^\infty u(\tilde x,\g y)d\tilde x \\
= {u(t\star e_0,\g 1)\over \bar U(t\star e_0,\g 1)}{T(t\star e_0)\over T'(t\star e_0)}\; \int_x^\infty {u(t\star z,\g y)\over u(t\star e_0,\g 1)}{T'(t\star e_0)\over T'(t\star z)} T'(z)dz  \rightarrow  T(x)^{-\alpha} h(\g y).
\end{multline*}
$ $
\end{proof}

For instance, consider the Gaussian distribution $\mathcal N(\mu,\sigma^2)$ with probability density $\phi$ and survival function $\bar \Phi.$ Since $2^{-1}t^{-1} \phi(t)/ \bar \Phi(t) \rightarrow  2^{-1} \sigma^{-2},$ Theorem \ref{thm:karamataRVx} gives $\phi(x)/T'(x)\in T\text{-RV}_{-2^{-1}\sigma^{-2}-1}$ and $\bar \Phi\in T \text{-RV}_{-2^{-1}\sigma^{-2}}$ for $T(x)=\exp(x^2).$ Let $f$ be the probability density of the log-Cauchy distribution and $\bar F$ its survival function. As $t\log t \,f(t)/\bar F(t)$ $\rightarrow$ $1,$ it follows that $x f(x) \in \log\text{-RV}_{-2},$ and $\bar F\in \log\text{-RV}_{-1}.$

We now extend the standard multivariate regular variation in (\ref{eq:rv}) to general decays. We denote the sign of $x\in\mathbb R$ by $\sigma_x\in \{-1,0,1\}$ and $\g\sigma_{\g x}=(\sigma_{x_1},\ldots, \sigma_{x_d})$ when $\g x\in \mathbb R^d.$ Operations between vectors are done componentwise, $x\star \g y:=(x\g 1) \star \g y,$ and $\g e_0:=e_0\g 1.$ We say that $u:\mathbb R^d\rightarrow [0,\infty)$ satisfying $u(\lambda \g 1)>0,$ $\forall \lambda\in E,$ is {\em regularly varying with decay $T$\/} if there exists $v$ such that $v(\lambda \g 1)>0,$ $\forall \lambda\in E,$ and
\begin{align}\label{eq:TRV}
{u\{( t \star |\g x|)\g \sigma_{\g x} \}\over u( t \star \g e_0) }\rightarrow v(\g x),
\end{align}
on $F:=\{(-e_1,-e_0)\cup \{0\}\cup [e_0,e_1)\}^d\setminus\{\g 0\}$; by convention, $(t\star |x|) 1_{x}=0$ if $x=0.$ 

Multivariate regular variation is easily expressed in terms of one-component regular variation by introducing the following change of variable. We say that $\phi:C\rightarrow (0,\infty)\times \Omega,$ for $\Omega\subset \mathbb R^{d-1},$ defines a {\em radial system of coordinates\/} if it has the form
\begin{align}\label{eq:defnRadialCoord}
\phi: \g x\mapsto \{r(\g x),\g\theta(\g x)\},\quad \phi^{-1}:(r,\g \theta)\mapsto \{r\star |\g\theta^{-1}(\g\theta)|\}\g\sigma_{\g \theta^{-1}(\g \theta)},
\end{align}
where $r(\cdot)$ and $\g\theta(\cdot)$ satisfy $r\{(\lambda\star |\g x|)\g \sigma_{\g x}\}=\lambda \star r(\g x),$ $\g \theta\{(\lambda\star |\g x|)\g \sigma_{\g x}\}=\g\theta(\g x),$ $\forall \lambda\in E.$ Examples include spherical coordinates on $\mathbb R^d$ and pseudo-polar coordinates defined by $r(\g x)=||\g x||,$ a norm, and $\g\theta (\g x)=\g x_{1:d-1}/r(\g x)$ on $\mathbb R^d_+.$ When $T\equiv \exp,$ the latter translates into $r(\g x)=\log(||e^{\g x}||),$ $\g\theta(\g x)=\g x - r(\g x).$ It becomes clear that $u$ is regularly varying if and only if $g(r,\g \theta):=u\{\phi^{-1}(r,\g\theta)\}$ is regularly varying w.r.t.\ its first component since
\begin{align}\label{eq:gtrt}
{g(t \star r,\g\theta)\over g(t\star  \g 1_r,\g 1_{\g \theta})}={u\{(t\star |\g x|)\g\sigma_{\g x}\}\over u(t\star \g e_0)} \rightarrow v(\g x), 
\end{align}
and in this case, Lemma~\ref{le:equivRVx} gives $v(\g x)=T\{r(\g x)\}^\alpha h\{\g\theta(\g x)\},$ where $\g 1_r=r(\g e_0)$ and $\g 1_{\g \theta} =\g\theta(\g e_0).$ In particular, $v$ is homogeneous of order $\alpha,$ i.e., $v\{(\lambda\star |\g x|)\g\sigma_{\g x}\}=T(\lambda)^\alpha v(\g x),$ $\forall \lambda \in E,$ and we write (\ref{eq:TRV}) as $u\in T\text{-RV}_{\alpha}(v).$

\section{One-Component Regular Variation for Probability Distributions}\label{par:ocrvDistr} 
So far, one-component regular variation has been treated for functions; we develop it further for distributions. A specific representation is found for regularly varying random vectors. Their limits moreover form the class of distributions that are homogeneous w.r.t.\ their first component. In a subsequent result, we show that one-component regular variation of the probability density implies, under an extra condition, one-component regular variation of the distribution. 

For simplicity, we treat the case $T\equiv \text{id}.$ Let $(X,\g Y)$ be a random variable with probability distribution $\mu$ on $[1,\infty)\times \mathbb R^{d-1}.$ We call $\bar F(x,\g y)=\mu\{[x,\infty)\times (-\g \infty,\g y]\}$ the {\em $x$-survival function\/}. Subscripts $X$ or $\g Y$ refer to the corresponding marginal distribution. Suppose that $\bar F_X>0.$ We are interested in the weak limit \cite{Billingsley1995} of $(t^{-1} X,\g Y)\mid X\geq t,$ equivalent to the weak limit of its distribution $\mu(t A,B)/\bar F_X(t),$ and of its $x$-survival function $\bar F(tx,\g y)/\bar F_X(t).$ Let $H$ and $\nu$ be two probability distributions on $\mathbb R^{d-1}$ and $E$ respectively such that $\nu_X \neq \delta_1.$ We denote by $P_\alpha$ the Pareto distribution on $[1,\infty)$ with shape $\alpha>0.$ 
 
\begin{theorem}[One-Component Regular Variation for Distributions]\label{thm:equivRVxMeas}
The following are equivalent.
\begin{enumerate}[(i)]
\item \label{it:leRVxMeas_limit}  $(t^{-1}X,\g Y)\mid X\geq t \overset{w}{\Rightarrow} \nu,$
\item \label{it:leRVxMeas_RV_X} $\g Y\mid X\geq t \overset{w}{\Rightarrow} H$ and $t^{-1}X\mid X\geq t\overset{w}{\Rightarrow} P_\alpha,$ 
\item \label{it:leRVxMeas_ReprF} $\bar F(x,\g y)=c(x) \exp\left\{-\int_{e_0}^x \alpha(z)z^{-1}dz\right\}\,Q(\g y\mid x),$ for measurable $\alpha(t)\rightarrow \alpha,$ $c(t)\rightarrow c$ and a conditional cumulative distribution function $Q(\g y \mid t)\overset{w}{\Rightarrow} H,$ 
\item \label{it:leRVxMeas_RV} $(t^{-1}X,\g Y)\mid X\geq t \overset{w}{\Rightarrow} P_{\alpha}\,\times \,H,$
\end{enumerate}
\end{theorem}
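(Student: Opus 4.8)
The plan is to prove the equivalence as the cycle (\ref{it:leRVxMeas_limit})$\Rightarrow$(\ref{it:leRVxMeas_RV_X})$\Rightarrow$(\ref{it:leRVxMeas_ReprF})$\Rightarrow$(\ref{it:leRVxMeas_RV})$\Rightarrow$(\ref{it:leRVxMeas_limit}), by translating every distributional statement into one about the $x$-survival function $\bar F$ and its $X$-marginal $\bar F_X$, so that the theorem becomes the distributional counterpart of Lemma~\ref{le:equivRVx} and Theorem~\ref{thm:reprRVx} with $T\equiv\text{id}$. Throughout I would use the standard fact \cite{Billingsley1995} that weak convergence of probability measures on $[1,\infty)\times\mathbb R^{d-1}$ is equivalent to pointwise convergence of the associated $x$-survival functions at every continuity point of the limit, the exceptional set lying in countably many coordinate hyperplanes. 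With this dictionary, (\ref{it:leRVxMeas_RV})$\Rightarrow$(\ref{it:leRVxMeas_limit}) is immediate upon taking $\nu=P_\alpha\times H$.

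For (\ref{it:leRVxMeas_limit})$\Rightarrow$(\ref{it:leRVxMeas_RV_X}), projecting the weak limit onto each block of coordinates yields $\g Y\mid X\ge t\overset{w}{\Rightarrow}\nu_{\g Y}=:H$ and $t^{-1}X\mid X\ge t\overset{w}{\Rightarrow}\nu_X$, so the point is to identify $\nu_X$ with $P_\alpha$. Writing $g(x):=\lim_t \bar F_X(tx)/\bar F_X(t)=\bar\nu_X([x,\infty))$ for $x\ge 1$, the factorization $\bar F_X(txy)/\bar F_X(t)=\{\bar F_X(txy)/\bar F_X(ty)\}\{\bar F_X(ty)/\bar F_X(t)\}$ together with the substitution $s=ty$ gives the Cauchy-type equation $g(xy)=g(x)g(y)$; since $g$ is non-increasing with $g(1)=1$, the only solutions are $g(x)=x^{-\alpha}$ with $\alpha\ge 0$ or $g\equiv 0$ on $(1,\infty)$. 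The latter forces $\nu_X=\delta_1$, while $\alpha=0$ would make $\bar\nu_X\equiv 1$, contradicting that $\nu_X$ is a probability measure on $[1,\infty)$; hence $\alpha>0$ and $\nu_X=P_\alpha$.

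For (\ref{it:leRVxMeas_RV_X})$\Rightarrow$(\ref{it:leRVxMeas_ReprF}), the convergence $t^{-1}X\mid X\ge t\overset{w}{\Rightarrow}P_\alpha$ is equivalent to $\bar F_X(tx)/\bar F_X(t)\to x^{-\alpha}$ for all $x\ge 1$, i.e.\ $\bar F_X\in\text{RV}_{-\alpha}$, so the univariate representation theorem (Theorem~\ref{thm:reprRVx} with $d=1$, or \cite{Bingham1989}) gives $\bar F_X(x)=c(x)\exp\{-\int_{e_0}^x\alpha(z)z^{-1}\,dz\}$ with $c(t)\to c>0$ and $\alpha(t)\to\alpha$. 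Setting $Q(\g y\mid x):=\bar F(x,\g y)/\bar F_X(x)=\pr(\g Y\le\g y\mid X\ge x)$, which is jointly measurable and, for each $x$ with $\bar F_X(x)>0$, a genuine cumulative distribution function in $\g y$, the first part of (\ref{it:leRVxMeas_RV_X}) says $Q(\cdot\mid t)=\mathcal L(\g Y\mid X\ge t)\overset{w}{\Rightarrow}H$, and $\bar F(x,\g y)=\bar F_X(x)\,Q(\g y\mid x)$ is exactly the claimed factorization. For (\ref{it:leRVxMeas_ReprF})$\Rightarrow$(\ref{it:leRVxMeas_RV}) I would write
\[
\frac{\bar F(tx,\g y)}{\bar F_X(t)}=\frac{c(tx)}{c(t)}\,\exp\left\{-\int_t^{tx}\alpha(z)z^{-1}\,dz\right\}\,Q(\g y\mid tx),
\]
and let $t\uparrow e_1$: the first factor tends to $1$, the exponent equals $\int_1^x\alpha(ts)s^{-1}\,ds\to\alpha\log x$ by dominated convergence (using that $\alpha(\cdot)$ is bounded near $e_1$), and $Q(\g y\mid tx)\to H((-\g\infty,\g y])$ at every continuity point of $H$ since $tx\uparrow e_1$. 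Hence $\bar F(tx,\g y)/\bar F_X(t)\to x^{-\alpha}H((-\g\infty,\g y])$ at every continuity point of the limit, which is precisely the $x$-survival function of $P_\alpha\times H$; by the quoted equivalence this is the weak convergence in (\ref{it:leRVxMeas_RV}), closing the cycle.

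The main obstacle is the disciplined handling of the dictionary between weak convergence of the conditional laws and pointwise convergence of the $x$-survival functions --- restricting to continuity points, and using monotonicity in $x$ to control the rest --- together with the identification of $\nu_X$: excluding the degenerate index $\alpha=0$ and appealing to the measurable-solution theory of the multiplicative Cauchy equation is what makes regular variation enter, and the exchange of limit and integral in the last display relies on the uniformity built into univariate regular variation.
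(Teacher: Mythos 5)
Your proposal is correct and follows essentially the same route as the paper: the same cycle (\ref{it:leRVxMeas_limit})$\Rightarrow$(\ref{it:leRVxMeas_RV_X})$\Rightarrow$(\ref{it:leRVxMeas_ReprF})$\Rightarrow$(\ref{it:leRVxMeas_RV})$\Rightarrow$(\ref{it:leRVxMeas_limit}), the same factorization $\bar F(x,\g y)=\bar F_X(x)\,Q(\g y\mid x)$ combined with the univariate representation theorem, and the same change of variable in the exponent for (\ref{it:leRVxMeas_ReprF})$\Rightarrow$(\ref{it:leRVxMeas_RV}). The only divergence is that for the identification $\nu_X=P_\alpha$ you re-derive the multiplicative Cauchy-equation argument (correctly, including the exclusion of $\alpha=0$ and of $\nu_X=\delta_1$), whereas the paper simply cites Theorem~2.1.4 of \cite{BasrakPhD2000}.
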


We say that $(X,\g Y)$ is {\em regularly varying w.r.t.\ its first component}, written $(X,\g Y) \in \text{RV}_{-\alpha}^x(H),$ if (\ref{it:leRVxMeas_limit})--(\ref{it:leRVxMeas_RV}) are satisfied. To see that $\nu_X\neq \delta_1$ is a necessary assumption, consider for instance $\bar F(x)=e^{-x}.$ The equivalence of (\ref{it:leRVxMeas_limit}) and (\ref{it:leRVxMeas_RV}) can be found in \cite{Lindskog2014} (Theorem~3.1 with $\{\lambda,(x,\g y)\}\mapsto (\lambda x,\g y))$ or in \cite{Heffernan2007} (Proposition~2 with $\alpha\equiv 1,$ $\beta\equiv 0$). Our contribution is the representation in (\ref{it:leRVxMeas_ReprF}) and a short proof.

\begin{proof} 
(\ref{it:leRVxMeas_limit}) $\Rightarrow$ (\ref{it:leRVxMeas_RV_X}): By Theorem~2.1.4 in \cite{BasrakPhD2000}, $t^{-1}X \mid X\geq t \overset{w}{\Rightarrow} \nu_X$ implies $\nu_X=P_{-\alpha}$ for some $\alpha>0.$ (\ref{it:leRVxMeas_RV_X}) $\Rightarrow$ (\ref{it:leRVxMeas_ReprF}): since $\bar F_X\in \text{RV}_{-\alpha}$ and $\bar F_{\g Y\mid X\geq t}\overset{w}{\Rightarrow} H,$ write $\bar F(x,\g y)=\bar F_X(x)\bar F_{\g Y\mid X\geq x}(\g y\mid x)$ and apply the representation theorem on $\bar F_X.$ (\ref{it:leRVxMeas_ReprF}) $\Rightarrow$ (\ref{it:leRVxMeas_RV}): 
$${\bar F(tx,\g y)\over \bar F_X(t)}=  {c(tx)\over c(t)}\exp\left\{-\int_t^{tx} \alpha(z)z^{-1}dz\right\}\,Q(\g y\mid tx)\overset{w}{\Rightarrow} x^{-\alpha}H(\g y),$$
because $\int_t^{tx} \alpha(z)z^{-1}dz=\int_1^{x} \alpha(t \tilde z)\tilde z^{-1}dz\rightarrow \alpha \log x.$  (\ref{it:leRVxMeas_RV}) $\Rightarrow$ (\ref{it:leRVxMeas_limit}): clear.
\end{proof}

Suppose further that $(X,\g Y)$ admits a probability density $f$ w.r.t.\ the Lebesgue measure such that $f_X$ and $f(\cdot,\g 1)$ are positive. We want to know what the relation is between regular variation of $f$ and of $(X,\g Y).$ 

We start by answering the question in the univariate case. From Karamata's theorem (Theorem~\ref{thm:karamataRVx}), $f_X\in \text{RV}_{-\alpha-1}$ implies $\bar F_X\in \text{RV}_{-\alpha}$ and its representation has coefficient $c(\cdot)\equiv 1$. Thus, $X\in \text{RV}_{-\alpha}$ and its representation now determines exactly a probability distribution: $\bar F_X(x)= \exp\left\{-\int_1^x \alpha(z)z^{-1}dz\right\},$ $\forall x\geq 1,$ for a positive and measurable $\alpha(t) \rightarrow$ $ \alpha>0.$ The following example shows that regular variation of $X$ does not imply regular variation of $f_{X}$ in general. Equalities and pointwise convergences between densities hold almost everywhere.

\begin{example}[Regular Variation of the Distribution but not of the Density]\label{ex:XRVbutnotfRV}
Let $\bar F_X(x)=\exp\left\{-\int_1^x \alpha(y) y^{-1}dy\right\}$ on $[1,\infty)$ and $\alpha(x)=\sin(x)+2.$ Since $\bar F_X(x)= c(x)x^{-2}$ for $c(x)=\exp\{-\int_1^x y^{-1} \sin y \,dy\}$ satisfying $c(t)\rightarrow 1,$ the representation theorem gives $\bar F_X\in\text{RV}_{-2},$ and thus $X\in\text{RV}_{-2}.$ However, $tf_X(t)/ \bar F_X(t)=\alpha(t)\not\rightarrow 2,$ and from Karamata's theorem $f_X$ is not regularly varying. 
\end{example}

We now provide an answer in the multivariate case. If $f\in \text{RV}^x(v)$ such that the limit $v$ is non-null and the sequence is dominated by an integrable function, then by the dominated convergence theorem
\begin{align}\label{eq:convDensityMonDom}
f_{t^{-1}X,\g Y\mid X\geq t}(x,\g y) = c_t^{-1} {f(tx,\g y)\over f(t,\g 1)} \rightarrow c^{-1} v(x,\g y),
\end{align}
where $c_t=\int_1^\infty \int_{\mathbb R^{d-1}} {f(t x,\g y)/ f(t,\g 1)}d x d\g y
\rightarrow c=  \int_1^\infty \int_{\mathbb R^{d-1}} v( x,\g y)d x d\g y<\infty.$  Alternatively, we can assume $v$ integrable and the sequence monotone instead of dominated. Let $\alpha>0$ and $H$ be a probability distribution on $\mathbb R^{d-1}$ with density $h$ satisfying $h(\g 1)>0.$ According to Lemma~\ref{le:equivRVx}, the limit in (\ref{eq:convDensityMonDom}) has the form $\alpha x^{-\alpha-1}h(\g y),$ and thus $(X,\g Y)\in \text{RV}_{-\alpha}(H).$ This is an important result: we can guarantee regular variation of $(X,\g Y)$ simply by computing the limit of $f(tx,\g y)/f(t,\g 1),$ a useful approach when $\bar F$ is intractable or the weak convergences in Theorem~\ref{thm:equivRVxMeas} are difficult to check. Whereas monotonicity or domination is sufficient to obtain (\ref{eq:convDensityMonDom}), our next result reveals necessary and sufficient conditions.

 \begin{theorem}[One-Component Regular Variation for Densities]\label{thm:RVxForDens}
The following are equivalent.
\begin{enumerate}[(i)]
\item \label{it:corRVx_RVx} $f_{\g Y\mid  X}(\g 1\mid t)\rightarrow h(\g 1)$ and $f\in \text{RV}^x_{-\alpha-1}\{h(\cdot)/h(\g 1)\},$ 
\item \label{it:corRVx_RV_fX} $f_{\g Y\mid  X}(\g y\mid t)\rightarrow h(\g y)$ and $f_{X}\in \text{RV}_{-\alpha-1},$
\item \label{it:corRVx_RV_fcond1} $f_{\g Y\mid  X}(\g y\mid t)\rightarrow h(\g y)$ and $f_{X\mid \g Y=\g 1}\in \text{RV}_{-\alpha-1},$
\item \label{it:corRVx_RV_fcondy} $f_{\g Y\mid  X}(\g y\mid t)\rightarrow h(\g y)$ and $f_{X\mid \g Y=\g y}\in \text{RV}_{-\alpha-1},$ $\forall \g y$ s.t.\ $h(\g y)>0,$ 
\item \label{it:corRVx_repr_ax} $f(x,\g y)=\alpha(x) x^{-1}\exp\left\{-\int_1^x \alpha(z)z^{-1}dz\right\}\,q(\g y\mid x),$ for a positive and measurable $\alpha(t)\rightarrow \alpha$ and a conditional probability density $q(\g y\mid t)\rightarrow h(\g y),$ $\forall x\geq 1,$ $\forall \g y,$ 
\item \label{it:pointTotConv} $f_{t^{-1}X,\g Y\mid X\geq t}(x,\g y)\rightarrow \alpha x^{-\alpha-1} h(\g y).$
\end{enumerate} 
In this case, $(X,\g Y)\in \text{RV}^x_{-\alpha}(H).$
\end{theorem}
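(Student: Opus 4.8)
The plan is to run the cycle (i)$\Rightarrow$(ii)$\Rightarrow$(iii)$\Rightarrow$(iv)$\Rightarrow$(v)$\Rightarrow$(vi)$\Rightarrow$(i), treating most links as routine bookkeeping with Lemma~\ref{le:equivRVx} and Karamata's theorem (Theorem~\ref{thm:karamataRVx}) for $T\equiv\text{id}$. The single identity used throughout is the factorisation $f(x,\g y)=f_X(x)\,f_{\g Y\mid X}(\g y\mid x)$, together with the elementary remark that a function in $\text{RV}_{\beta}$ stays there after multiplication by any function converging to a strictly positive constant. Since $h(\g 1)>0$, the hypothesis $f_{\g Y\mid X}(\g y\mid t)\to h(\g y)$ (common to (ii)--(iv)) is, modulo the convergence $f_{\g Y\mid X}(\g 1\mid t)\to h(\g 1)$, the same as $f(t,\g y)/f(t,\g 1)\to h(\g y)/h(\g 1)$; so by Lemma~\ref{le:equivRVx} the statement ``$f\in\text{RV}^x_{-\alpha-1}\{h(\cdot)/h(\g 1)\}$'' is equivalent to ``$f(\cdot,\g 1)\in\text{RV}_{-\alpha-1}$ and $f(t,\g y)/f(t,\g 1)\to h(\g y)/h(\g 1)$'', which paired with $f(t,\g 1)=f_X(t)\,f_{\g Y\mid X}(\g 1\mid t)$ gives (i)$\Leftrightarrow$(ii). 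For (ii)$\Leftrightarrow$(iii)$\Leftrightarrow$(iv), once $f_{\g Y\mid X}(\g y\mid t)\to h(\g y)$ is granted, the relations $f(\cdot,\g y)=f_X(\cdot)\,f_{\g Y\mid X}(\g y\mid\cdot)$ and $f_{X\mid\g Y=\g y}(\cdot)=f(\cdot,\g y)/f_{\g Y}(\g y)$ show that regular variation of any one of $f_X$, $f_{X\mid\g Y=\g 1}$, $f_{X\mid\g Y=\g y}$ (for $\g y$ with $h(\g y)>0$) forces it for all of them with the same index; finiteness of the relevant $f_{\g Y}(\g y)$ is automatic from (\ref{eq:boundT}) since $-\alpha-1<-1$.

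For (ii)$\Rightarrow$(v), apply Karamata's theorem to $f_X\in\text{RV}_{-\alpha-1}$: it gives $\bar F_X\in\text{RV}_{-\alpha}$ with representation coefficient $c(x)\equiv\bar F_X(1)=1$, i.e.\ $\bar F_X(x)=\exp\{-\int_1^x\alpha(z)z^{-1}dz\}$ with $\alpha(z):=z f_X(z)/\bar F_X(z)\to\alpha$; differentiating gives $f_X(x)=\alpha(x)x^{-1}\exp\{-\int_1^x\alpha(z)z^{-1}dz\}$, and multiplying by $q(\g y\mid x):=f_{\g Y\mid X}(\g y\mid x)$ yields the stated form. Conversely, integrating the representation in (v) over $\g y$ recovers this expression for $f_X$, whence $f_X\in\text{RV}_{-\alpha-1}$ by the substitution $z=t\tilde z$ used in the proof of Theorem~\ref{thm:reprRVx}, while $f_{\g Y\mid X}(\g y\mid x)=q(\g y\mid x)\to h(\g y)$; so (v)$\Leftrightarrow$(ii). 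Finally (v)$\Rightarrow$(vi) is a direct computation: the representation forces $\bar F_X(t)=\exp\{-\int_1^t\alpha(z)z^{-1}dz\}$ (the boundary term at $\infty$ vanishes because $\alpha(z)\to\alpha>0$), so $f_{t^{-1}X,\g Y\mid X\geq t}(x,\g y)=t f(tx,\g y)/\bar F_X(t)=\alpha(tx)\,x^{-1}\exp\{-\int_t^{tx}\alpha(z)z^{-1}dz\}\,q(\g y\mid tx)\to\alpha x^{-\alpha-1}h(\g y)$, using $\int_t^{tx}\alpha(z)z^{-1}dz=\int_1^x\alpha(t\tilde z)\tilde z^{-1}d\tilde z\to\alpha\log x$.

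The substance of the theorem is (vi)$\Rightarrow$(i), and I expect it to be the main obstacle. Write $g_t(x,\g y)=t f(tx,\g y)/\bar F_X(t)$ and $g(x,\g y)=\alpha x^{-\alpha-1}h(\g y)$. Dividing the assumed limit at $(x,\g y)$ by its value at $(1,\g 1)$ gives $f(tx,\g y)/f(t,\g 1)\to x^{-\alpha-1}h(\g y)/h(\g 1)$, so $f\in\text{RV}^x_{-\alpha-1}\{h(\cdot)/h(\g 1)\}$ by Lemma~\ref{le:equivRVx} --- this is already the second half of (i), and Karamata's theorem applied to $f$ then gives $\bar U(t,\g 1):=\int_t^\infty f(s,\g 1)\,ds\in\text{RV}_{-\alpha}$ with $t f(t,\g y)/\bar U(t,\g 1)\to\alpha h(\g y)/h(\g 1)$. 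Next, both $g_t$ and $g$ are probability densities on $[1,\infty)\times\mathbb R^{d-1}$, so Scheffé's lemma upgrades $g_t\to g$ to $\int|g_t-g|\to0$; integrating this over slabs $[1,M]\times\mathbb R^{d-1}$ gives $\bar F_X(tM)/\bar F_X(t)\to M^{-\alpha}$, hence $\bar F_X\in\text{RV}_{-\alpha}$, and integrating over $\g y$ gives $f_{\g Y\mid X\geq t}\to h$ in $L^1$, so $\{g_t\}$ is uniformly tight. It remains to get $f_{\g Y\mid X}(\g 1\mid t)\to h(\g 1)$: Fatou applied to $\int f(t,\g y)/f(t,\g 1)\,d\g y=f_X(t)/f(t,\g 1)$ gives $\liminf_t f_X(t)/f(t,\g 1)\ge 1/h(\g 1)$, and the reverse inequality is a no-escape-of-mass statement --- this is the delicate point, and it is where the tightness extracted from Scheffé's lemma is indispensable, since the pointwise hypothesis of (vi) alone does not control the $\g y$-tails (indeed Example~\ref{ex:XRVbutnotfRV} shows $\bar F_X\in\text{RV}_{-\alpha}$ does not by itself force $f_X\in\text{RV}_{-\alpha-1}$). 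Combining it with $\bar F_X(t)/\bar U(t,\g 1)\to1/h(\g 1)$ and $t f(t,\g 1)/\bar U(t,\g 1)\to\alpha$ (hence $tf(t,\g 1)/\bar F_X(t)\to\alpha h(\g 1)$) then pins down $f_X\in\text{RV}_{-\alpha-1}$ and $f_{\g Y\mid X}(\g y\mid t)\to h(\g y)$, closing the cycle.

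The final assertion $(X,\g Y)\in\text{RV}^x_{-\alpha}(H)$ follows from Theorem~\ref{thm:equivRVxMeas}: under (ii), $f_X\in\text{RV}_{-\alpha-1}$ gives $\bar F_X\in\text{RV}_{-\alpha}$, i.e.\ $t^{-1}X\mid X\geq t\overset{w}{\Rightarrow}P_\alpha$, while $f_{\g Y\mid X}(\g y\mid t)\to h(\g y)$ transfers through the $f_X$-weighted average defining $f_{\g Y\mid X\geq t}$ to give $f_{\g Y\mid X\geq t}\to h$ pointwise, hence (Scheffé once more) $\g Y\mid X\geq t\overset{w}{\Rightarrow}H$ --- which are the conditions in Theorem~\ref{thm:equivRVxMeas}(ii). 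Equivalently, the $L^1$ convergence $\int|g_t-g|\to0$ obtained above already reads $(t^{-1}X,\g Y)\mid X\geq t\overset{w}{\Rightarrow}P_\alpha\times H$, condition (iv) of that theorem.
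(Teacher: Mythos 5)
Most of your cycle is sound and coincides with the paper's: the equivalences (i)$\Leftrightarrow$(ii)$\Leftrightarrow$(iii)$\Leftrightarrow$(iv) via Lemma~\ref{le:equivRVx} and the factorisation $f=f_X\,f_{\g Y\mid X}$, the link (ii)$\Leftrightarrow$(v) via Karamata's theorem and differentiation of the representation of $\bar F_X$ with $c(\cdot)\equiv 1$, and (v)$\Rightarrow$(vi) by exactly the computation the paper performs. The final assertion via Theorem~\ref{thm:equivRVxMeas} is also handled as in the paper (Scheffé). The problem is the link you yourself single out as the substance of the theorem, (vi)$\Rightarrow$(i), and specifically its first condition $f_{\g Y\mid X}(\g 1\mid t)\rightarrow h(\g 1)$.

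There you reduce correctly to showing $\limsup_t f_X(t)/f(t,\g 1)\leq 1/h(\g 1)$ (Fatou giving the $\liminf$), equivalently $tf_X(t)/\bar F_X(t)\rightarrow \alpha$, but you never prove it: you assert that the ``tightness extracted from Scheffé's lemma is indispensable'' and that combining the listed limits ``pins down'' the conclusion. That is not an argument, and the mechanism you invoke cannot supply one: Scheffé's lemma yields $\int |g_t-g|\,dx\,d\g y\rightarrow 0$, an integral statement on the product space, which gives no control over $\int g_t(1,\g y)\,d\g y=tf_X(t)/\bar F_X(t)$, i.e.\ over the marginal density on the null slice $\{x=1\}$. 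One can add to $f$ a nonnegative perturbation supported on thin sets $\{(s,\g y):\g y\in B_s\}$ drifting to infinity in $\g y$, which vanishes in every pointwise limit appearing in (vi) and is negligible in $L^1$, yet inflates $f_X(t)$ along a subsequence — this is the same slice-versus-integral phenomenon as in Example~\ref{ex:XRVbutnotfRV}, which you cite but do not neutralise. The paper closes this step by a different route: from Scheffé's lemma it deduces $(X,\g Y)\in\text{RV}^x_{-\alpha}(H)$, hence $X\in\text{RV}_{-\alpha}$, and then invokes Karamata's theorem (Theorem~\ref{thm:karamataRVx}) to obtain $\bar F_X(t)/\{tf_X(t)\}\rightarrow \alpha^{-1}$, which combined with $tf(t,\g 1)/\bar F_X(t)\rightarrow \alpha h(\g 1)$ (condition (vi) at $(1,\g 1)$) gives the claim. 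You should either adopt that argument or replace the tightness heuristic with an actual proof that $tf_X(t)/\bar F_X(t)\rightarrow\alpha$; as written, the crucial implication is not established.
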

In (\ref{it:corRVx_RVx}), the first condition is necessary and can be replaced by $tf(t,\g 1)/\bar F(t)\rightarrow \alpha h(1),$ and in (\ref{it:corRVx_RV_fX}) by $t f(t,\g y)/\bar F_X(t)$ $\rightarrow$ $\alpha h(\g y)/h(\g 1).$ 

\begin{proof} (\ref{it:corRVx_RVx}) $\Leftrightarrow$ (\ref{it:corRVx_RV_fcondy}): straightforward; use Lemma \ref{le:equivRVx} for (\ref{it:corRVx_RV_fcond1}) $\Rightarrow$ (\ref{it:corRVx_RV_fcondy}) and (\ref{it:corRVx_RV_fcondy}) $\Rightarrow$ (\ref{it:corRVx_RVx}). (\ref{it:corRVx_RV_fX}) $\Rightarrow$ (\ref{it:corRVx_repr_ax}): write $f(x,\g y)=f_X(x)f_{\g Y\mid X}(\g y\mid x)$. On the one hand, $f_X\in \text{RV}_{-\alpha}$ and thus from Karamata's theorem (Theorem~\ref{thm:karamataRVx}) $\bar F_X$ has a representation with $c(\cdot)\equiv 1.$ Differentiate this representation to find one for $f_X.$ On the other hand, $f_{\g Y\mid X}(\g y\mid t)\rightarrow h(\g y)$ by assumption. Altogether, this gives (\ref{it:corRVx_repr_ax}), a well-defined probability density because $\epsilon$ and $q$ are measurable, and Tonelli's theorem ensures it integrates to $1.$ (\ref{it:corRVx_repr_ax}) $\Rightarrow$ (\ref{it:pointTotConv}): integrate (\ref{it:corRVx_repr_ax}) to find $\bar F(x)=\exp\left\{-\int_1^x \alpha(z) z^{-1}dz\right\},$ and thus
${tf(tx,\g y)/ \bar F_X(t)}=\alpha(tx)x^{-1} \exp\left\{-\int_{1}^{x} \alpha(tz)z^{-1}dz\right\}q(\g y\mid t x)\rightarrow \alpha x^{-\alpha-1} h(\g y).$ (\ref{it:pointTotConv}) $\Rightarrow$ (\ref{it:corRVx_RVx}): first,
$${f(tx,\g y)\over f(t,\g 1)}={tf(tx,\g y)\over \bar F_X(t)}{\bar F_X(t)\over tf(t,\g 1)}\rightarrow  x^{-\alpha-1}{h(\g y)\over h(\g 1)}.$$
Second, apply Scheffé's Lemma \cite{Durrett2010} on (\ref{it:pointTotConv}) to obtain $(X,\g Y)\in \text{RV}^x_{-\alpha}(H)$ and, in particular, $X\in \text{RV}_{-\alpha}.$ Karamata's theorem ensures that
$$f_{\g Y\mid X}(\g 1\mid t)={tf(t,\g 1)\over\bar F_X(t)}\;{\bar F_X(t)\over tf_X(t)}\rightarrow h(\g 1).$$
$ $
\end{proof}

The following example illustrates the use of Theorem~\ref{thm:RVxForDens}, which holds also when the convergence is not uniform in $\g y.$

\begin{example}[Illustration of Theorem \ref{thm:RVxForDens}]\label{ex:fRV_noUnifConv}
Consider 
 \begin{align*}
 f(x,y) = \left\{ 
    \begin{array}{ll} 
\frac{6}{5} x^{-2}  & \mbox{if } y\in(0,x^{-1}),\\
 \frac{6}{5} x^{-2} y  & \mbox{else},\\
    \end{array}
\right.
 \quad 
 \text{on } \, [1,\infty)\times (0,1].
\end{align*}
The convergence ${f(tx,y)/ f(t,1)}\rightarrow x^{-2} y$ gives $f\in \text{RV}_{-2}\{h(\cdot)/h(1)\}$ for the cdf $H(y)=y^2$ with probability density $h(y)=2y$ on $(0,1].$ Since $f(t,1)/f_X(t)\rightarrow 2=h(1)$ (condition (\ref{it:corRVx_RVx}), Theorem~\ref{thm:RVxForDens}), $(X,\g Y)\in \text{RV}_{-\alpha}(H),$ i.e., ${\bar F(tx,y)/ \bar F_X(t)}\rightarrow x^{-1}y^2.$ The same conclusion is drawn by showing $f_X\in \text{RV}_{-2}$ and $f_{Y\mid X}(y\mid t)\rightarrow 2y = h(y)$ (condition (\ref{it:corRVx_RV_fX}), Theorem~\ref{thm:RVxForDens}).
\end{example}

 The results of this section are easily extended to hold for the arbitrary decay $T$ defined in (\ref{eq:defStar}). We write $(X,\g Y)\in T\text{-RV}^x_{-\alpha}(H)$ if $\{T(X),\g Y\}\in \text{RV}^x_{-\alpha}(H).$ From the continuous mapping theorem, this is equivalent to $\mu_{t\star}\overset{w}{\Rightarrow}\nu,$
where $\mu_{\lambda\star}(A,B):={\mu(\lambda\star A,B)/ \mu_X(\lambda \star E)}$ is a probability distribution on $E\times \mathbb R^d$ for every $\lambda\in E.$ In this case, $\nu=P_\alpha \circ T\times H,$ and in particular $\nu_X$ has survival function $T(x)^{-\alpha}.$ We say that $\mu$ is homogeneous w.r.t.\ its first component if $\mu(\lambda \star A, B)=T(\lambda)^{-\alpha} \mu(A, B),$ $\forall \lambda \in E.$ Theorem~\ref{thm:equivRVxMeas} characterizes such distributions. 

\begin{corollary}[Homogeneous Distributions]\label{cor:charHomgOneComp} The following are equivalent: $\mu$ is homogeneous of order $-\alpha$ w.r.t.\ its first component; $\bar F$ is homogeneous of order $-\alpha$ w.r.t.\ its first component; $\mu_{\lambda \star}=\mu,$ $\forall \lambda \in E;$ $X\ci \g Y$ and $X\sim P_\alpha\circ T;$ (under its existence) $f_{X, \g Y}$ is homogeneous of order $-\alpha-1$ w.r.t.\ its first component.
\end{corollary}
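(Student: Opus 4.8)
The plan is to run a cycle of implications through the five properties, treating \emph{homogeneity} as the assertion that $\mu$ coincides with its own rescaling for \emph{every} $\lambda\in E$, not merely in the limit $\lambda\uparrow e_1$ --- so that Theorem~\ref{thm:equivRVxMeas} (in its general-$T$ form) becomes applicable with the rescaled sequence being constant. Write P1--P5 for the five listed properties. I would prove P1 $\Leftrightarrow$ P2, then P1 $\Rightarrow$ P3 $\Rightarrow$ P4 $\Rightarrow$ P1, and finally P2 $\Leftrightarrow$ P5 under existence of the density; this closes all equivalences. Throughout I use that $\lambda\star[x,e_1)=[\lambda\star x,e_1)$ (since $t\mapsto\lambda\star t$ is an increasing diffeomorphism of $E$ onto $[\lambda,e_1)$), that $T(\lambda\star A)=T(\lambda)T(A)$, and that $\mu_X(\lambda\star E)=\bar F_X(\lambda)$.

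For P1 $\Leftrightarrow$ P2, the forward direction is immediate: restrict the identity $\mu(\lambda\star A,B)=T(\lambda)^{-\alpha}\mu(A,B)$ to rectangles $A=[x,e_1)$, $B=(-\g\infty,\g y]$, so that $\mu(\lambda\star A,B)=\bar F(\lambda\star x,\g y)$ and $\mu(A,B)=\bar F(x,\g y)$. For the converse I fix $\lambda$ and note that $C\mapsto\mu(\lambda\star C)$ and $C\mapsto T(\lambda)^{-\alpha}\mu(C)$ are two finite measures on $E\times\mathbb R^{d-1}$ which, by P2 (and its consequence $\bar F_X(\lambda)=T(\lambda)^{-\alpha}$ obtained by letting $\g y\to\g\infty$), agree on the $\pi$-system of upper rectangles $[x,e_1)\times(-\g\infty,\g y]$ and have equal total mass; Dynkin's $\pi$-$\lambda$ theorem then extends the identity to all Borel sets, which is P1.

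For P1 $\Rightarrow$ P3, taking $A=E$, $B=\mathbb R^{d-1}$ in P1 gives $\mu_X(\lambda\star E)=T(\lambda)^{-\alpha}>0$, so $\mu_{\lambda\star}$ is well defined and $\mu_{\lambda\star}(A,B)=\mu(\lambda\star A,B)/\mu_X(\lambda\star E)=\mu(A,B)$. For P3 $\Rightarrow$ P4, the sequence $\mu_{\lambda\star}$ is constant, hence converges weakly to $\mu$ as $\lambda\uparrow e_1$; moreover the very fact that $\mu_{\lambda\star}$ is defined for all $\lambda$ forces $\bar F_X>0$, which excludes the degenerate marginal ruled out by the standing hypothesis in Theorem~\ref{thm:equivRVxMeas}. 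The equivalence of its items~(\ref{it:leRVxMeas_limit}) and~(\ref{it:leRVxMeas_RV}), in the general-$T$ version, then yields $\mu=\nu=P_\alpha\circ T\times H$ for some $\alpha>0$ and probability measure $H$; the index $\alpha$ is not free but is read off from $\bar F_X=T^{-\alpha}$, and the product form is exactly $X\ci\g Y$ with $X\sim P_\alpha\circ T$. (Alternatively, one can bypass Theorem~\ref{thm:equivRVxMeas}: P3 makes $\lambda\mapsto\mu_X(\lambda\star E)$ multiplicative for $\star$, monotone, and vanishing at $e_1$, hence equal to $T(\cdot)^{-\alpha}$, and then $\bar F(x,\g y)=\mu(x\star E,(-\g\infty,\g y])=T(x)^{-\alpha}\mu_{\g Y}((-\g\infty,\g y])=\bar F_X(x)\bar F_{\g Y}(\g y)$ gives the independence directly.) For P4 $\Rightarrow$ P1, write $\mu=P_\alpha\circ T\times H$ and compute $\mu(\lambda\star A,B)=P_\alpha\bigl(T(\lambda)T(A)\bigr)H(B)=T(\lambda)^{-\alpha}P_\alpha\bigl(T(A)\bigr)H(B)=T(\lambda)^{-\alpha}\mu(A,B)$, using the elementary scaling $P_\alpha(cS)=c^{-\alpha}P_\alpha(S)$ of the Pareto law on $[1,\infty)$ for $c\ge1$.

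Finally, for P2 $\Leftrightarrow$ P5 when the density $f$ exists: integrating a density homogeneous of order $-\alpha-1$ and changing variables according to $z=\lambda\star\tilde z$ (Jacobian $T(\lambda)T'(\tilde z)/T'(\lambda\star\tilde z)$) shows $\bar F$ is homogeneous of order $-\alpha$; conversely, differentiating $\bar F(\lambda\star x,\g y)=T(\lambda)^{-\alpha}\bar F(x,\g y)$ once in $x$ and once in each coordinate of $\g y$, with $\tfrac{d}{dx}(\lambda\star x)=T(\lambda)T'(x)/T'(\lambda\star x)$, recovers the homogeneity of $f$ (for $T\equiv\mathrm{id}$ this is simply $f(\lambda x,\g y)=\lambda^{-\alpha-1}f(x,\g y)$). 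The step I expect to require the most care is P3 $\Rightarrow$ P4: one must recognize that a constant sequence still converges weakly, that well-definedness of $\mu_{\lambda\star}$ already supplies the non-degeneracy assumption needed to invoke Theorem~\ref{thm:equivRVxMeas}, and that the homogeneity index is the one determined by $\bar F_X$ rather than an independent parameter; the remaining steps are routine bookkeeping with $\pi$-systems, change of variables, and the scaling of the Pareto distribution.
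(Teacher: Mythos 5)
Your proof is correct and follows the route the paper intends: the paper offers no explicit argument beyond noting that Theorem~\ref{thm:equivRVxMeas} characterizes homogeneous distributions, and your central step --- applying that theorem in its general-$T$ form to the constant sequence $\mu_{\lambda\star}=\mu$ to conclude $\mu=P_\alpha\circ T\times H$ --- together with the $\pi$-system argument for passing from $\bar F$ to $\mu$ and the Pareto scaling for closing the cycle, is exactly the intended fleshing-out. One small point worth recording in the density equivalence: for general $T$ your own chain-rule factor $T(\lambda)T'(x)/T'(\lambda\star x)$ shows that it is $f/T'$, not $f$ itself, that scales by $T(\lambda)^{-\alpha-1}$, so the clean identity $f(\lambda\star x,\g y)=T(\lambda)^{-\alpha-1}f(x,\g y)$ holds only for $T\equiv\mathrm{id}$ (as you note); this is a looseness in the corollary's statement rather than a gap in your argument.
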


We now focus on one-component regular variation when the first component is the radius of a random vector $\g X$ and the rest of the vector is the angle  --- note the change in the notation. Let $(R,\g \Theta)=$ $\phi(\g X)$ $=\{r(\g X),\g\theta(\g X)\}$ be its expression in radial coordinates defined in (\ref{eq:defnRadialCoord}) with $T\equiv \text{id}$ and suppose that $\bar F_R>0$. Let $\alpha>0$ and $\g Y\sim \nu,$ a probability distribution on $C_r=\{ r(\g x)\geq 1\},$ such that $r(\g Y)\not \sim \delta_1.$ 

Analogously to functions, $\g X$ is multivariate regularly varying in the sense (\ref{eq:mrvVector}) if and only if $(R,\g \Theta)$ is one-component regularly varying. This relation is well-known when, for instance, $\phi$ is the change of variables into pseudo polar coordinates \cite{basrak2002characterization}.

\begin{proposition}[Multivariate and One-Component Regular Variation]\label{pro:qRVoRV_a} 
\begin{align}\label{eq:qRVoRV_a} 
t^{-1}\g X\mid \g X\in tC_r \overset{w}{\Rightarrow}  \nu \; \text{ on }\; C_r \quad \iff \quad (R,\g\Theta)\in \text{RV}_{-\alpha}(H)\;\text{ on }\;[1,\infty)\times \Omega.
\end{align}
In this case, $\nu$ is homogeneous of order $-\alpha,$ i.e., $ \nu(\lambda A)=\lambda^{-\alpha}\nu(A),$ $\forall \lambda\geq 1.$
\end{proposition}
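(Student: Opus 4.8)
The plan is to reduce Proposition~\ref{pro:qRVoRV_a} to the already-established characterization of one-component regular variation for distributions (Theorem~\ref{thm:equivRVxMeas}) via the change of coordinates $\phi$. The key observation is that $\phi$ intertwines the scaling $\g x \mapsto \lambda \g x$ on $\mathbb R^d$ with the scaling $(r,\g\theta)\mapsto(\lambda r,\g\theta)$ acting only on the first (radial) component: by the defining properties of a radial system of coordinates in (\ref{eq:defnRadialCoord}) with $T\equiv\mathrm{id}$, we have $r(\lambda\g x)=\lambda\, r(\g x)$ and $\g\theta(\lambda\g x)=\g\theta(\g x)$, hence $\phi(\lambda \g x)=\{\lambda\, r(\g x),\g\theta(\g x)\}$ and $\g X\in tC_r$ is the same event as $R\geq t$. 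Since $\phi$ is a bijection onto $(0,\infty)\times\Omega$ and a homeomorphism, the continuous mapping theorem says that $t^{-1}\g X\mid \g X\in tC_r\overset{w}{\Rightarrow}\nu$ on $C_r$ if and only if $\phi(t^{-1}\g X)\mid R\geq t\overset{w}{\Rightarrow}\nu\circ\phi^{-1}$ on $[1,\infty)\times\Omega$; and $\phi(t^{-1}\g X)=(t^{-1}R,\g\Theta)$ by the scaling relations. Writing $\mu:=\nu\circ\phi^{-1}$, with $H:=\mu_{\g\Theta}$ (the angular part) and noting $r(\g Y)\not\sim\delta_1$ translates to $\mu_R\neq\delta_1$, this is exactly condition (\ref{it:leRVxMeas_limit}) of Theorem~\ref{thm:equivRVxMeas} for the vector $(R,\g\Theta)$.

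First I would record the scaling identities for $r$ and $\g\theta$ and deduce $\phi(\lambda\g x)=(\lambda r(\g x),\g\theta(\g x))$, so that the map $\psi_t\colon\g x\mapsto\phi(t^{-1}\g x)=(t^{-1}r(\g x),\g\theta(\g x))$ makes the diagram commute; this is display (\ref{eq:gtrt}) at the level of functions, now transported to measures. Second I would invoke the continuous mapping theorem to transfer the weak convergence back and forth across $\phi$, which is legitimate because $\phi$ and $\phi^{-1}$ are continuous on the relevant domains. Third I would apply Theorem~\ref{thm:equivRVxMeas}, using the equivalence of its items (\ref{it:leRVxMeas_limit}) and (\ref{it:leRVxMeas_RV}): (\ref{it:leRVxMeas_limit}) gives $(R,\g\Theta)\in\mathrm{RV}^x_{-\alpha}(H)$ for some $\alpha>0$, and conversely the latter gives back the weak convergence. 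Finally, for the homogeneity of $\nu$: Theorem~\ref{thm:equivRVxMeas}(\ref{it:leRVxMeas_RV}) identifies the limit $\mu=P_\alpha\times H$, which Corollary~\ref{cor:charHomgOneComp} shows is homogeneous of order $-\alpha$ in its first component, i.e.\ $\mu(\lambda A',B)=\lambda^{-\alpha}\mu(A',B)$; pulling this back through $\phi^{-1}$ and using $\phi^{-1}(\lambda r,\g\theta)=\lambda\,\phi^{-1}(r,\g\theta)$ yields $\nu(\lambda A)=\lambda^{-\alpha}\nu(A)$ for all Borel $A$ and $\lambda\geq1$.

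The main obstacle is mostly bookkeeping rather than a deep difficulty: one must check that $\phi$ genuinely carries the event $\{\g X\in tC_r\}$ to $\{R\geq t\}$ and the rescaled vector $t^{-1}\g X$ to $(t^{-1}R,\g\Theta)$ under the \emph{same} $t$, which hinges on the exact homogeneity $r(\lambda\g x)=\lambda r(\g x)$ built into (\ref{eq:defnRadialCoord}) (for general $T$ one would instead get $r(\lambda\star|\g x|\g\sigma_{\g x})=\lambda\star r(\g x)$, which is why the statement fixes $T\equiv\mathrm{id}$). A secondary point requiring a word of care is the pushforward of the limiting measure: $\nu$ lives on $C_r$, which $\phi$ maps onto $[1,\infty)\times\Omega$, and one should note that $\phi$ restricted to $C_r$ is still a homeomorphism onto this set so that weak convergence is preserved; once that is in place, every remaining step is a direct citation of Theorem~\ref{thm:equivRVxMeas} and Corollary~\ref{cor:charHomgOneComp}.
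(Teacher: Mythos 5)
Your proposal is correct and follows essentially the same route as the paper: transport the weak convergence across $\phi$ via the continuous mapping theorem using the scaling identities $r(\lambda\g x)=\lambda r(\g x)$, $\g\theta(\lambda\g x)=\g\theta(\g x)$, invoke Theorem~\ref{thm:equivRVxMeas}, and derive the homogeneity of $\nu$ from Corollary~\ref{cor:charHomgOneComp} together with the order-one homogeneity of $\phi^{-1}$ in its first component. The only cosmetic difference is that the paper phrases the homogeneity step as a conditional-probability computation with $\g Y=\phi^{-1}(R^\star,\g\Theta^\star)$ rather than as a pushforward of the identity $\mu(\lambda A',B)=\lambda^{-\alpha}\mu(A',B)$, but the content is identical.
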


\begin{proof}
We only prove the direct implication; the reverse goes similarly. Since $\phi^{-1}$ is homogeneous of order $1$ w.r.t.\ its first component, write $t^{-1}R,\g \Theta\mid R\geq t=\phi(t^{-1}\g X)\mid R\geq t,$ which converges weakly to a probability distribution by the continuous mapping theorem. Hence, $(R,\g\Theta)\in \text{RV}_{-\alpha}(H).$ To show the homogeneity of $\nu,$ let $\g Y=\phi^{-1}(R^\star,\g\Theta^\star) \sim \nu$ and use Corollary~\ref{cor:charHomgOneComp}:
\begin{align*}
\nu(\lambda A)&=\pr\{\phi^{-1}(\lambda^{-1}R^\star,\g\Theta^\star)\in A\mid R^\star\geq \lambda\}\pr(R^\star\geq \lambda)\\
&=\lambda^{-\alpha} \pr\{\phi^{-1}(R^\star,\g\Theta^\star)\in A\}  = \lambda^{-\alpha}\nu(A),\quad\forall \lambda\geq 1.
\end{align*}
\end{proof}

 We denote the multivariate regular variation in (\ref{eq:qRVoRV_a}) by $\g X\in \text{RV}_{-\alpha}(\nu)$ or $\text{RV}_{-\alpha}(\g Y).$ Regular variation w.r.t.\ another radial function $\tilde r(\cdot)$ is equivalent if there exists $c_1,c_2>0$ such that $c_1 r(\g x)\leq \tilde r(\g x)\leq c_2 r(\g x),$ $\forall \g x,$ and in this case, the limits are related as follows: $\nu_r(A)=\nu_{\tilde r}(c_2 A)/\nu_{\tilde r}(c_2 C_r).$ 

Suppose that $\g X$ admits a probability density $f_{\g X}$ w.r.t.\ the Lebesgue measure and let $f_{\g Y}$ be a probability on $C_r$ such that $f_{\g X}(\lambda \g 1),f_{\g Y}(\lambda \g 1)>0,$ $\forall \lambda\geq 1$. Similarly to what we did in the previous section, we now study the relation between regular variation of $f_{\g X}$ in the sense (\ref{eq:TRV}) and of $\g X.$ If $f_{\g X}\in \text{RV}(v)$ for some non-null function $v$ and the sequence is dominated by an integrable function, then
\begin{align}\label{eq:convDensityfXMonDom}
f_{t^{-1}\g X\mid \g X\in  tC_r}(\g x) = {f_{\g X}(t \g x)\over f_{\g X}(t \g 1)} \left( \int_{C_r} {f_{\g X}(t \tilde{\g x})\over f_{\g X}(t\g 1)}d\tilde{\g x} \right)^{-1} \rightarrow {v(\g x)\over \int_{C_r} v(\tilde{\g x})d\tilde{\g x} }=:f_{\g Y}(\g x),
\end{align}
thus the limiting probability density $f_{\g Y}$ is homogeneous and by Scheffé's Lemma $\g X\in \text{RV}_{-\alpha}(\g Y)$ on $C_r.$ As a comparison, Proposition 5.20 in \cite{Resnick1987Extreme} has the same conclusion, however, its assumptions require the convergence to be uniform and the limiting density to be bounded on $\{\g x\in \mathbb R^d: r(\g x)=1\}.$ 

Relations (\ref{eq:qRVoRV_a}) and (\ref{eq:convDensityfXMonDom}) are easily extended in the case of the general decay $T.$ We write $\g X\in T\text{-RV}^x_{-\alpha}(H)$ if $\{T(X_1),\ldots, T(X_d)\}\in \text{RV}^x_{-\alpha}(\nu)$ or equivalently $\mu_{t\star}\overset{w}{\Rightarrow}\nu,$ where $\mu_{\lambda\star}(A):=\mu(\lambda\star A)/ \mu(\lambda \star C_r)$ defines a probability distribution on $C_r,$ $\forall \lambda\in E.$ In this case, $\nu$ is homogeneous of order $-\alpha,$ i.e.,
$\mu(\lambda \star A)=T(\lambda)^{-\alpha} \mu(A),$ $\forall \lambda \in E.$


\section{Multivariate Exceedances}\label{par:me} 
Extreme-value theory aims at describing the distribution of an $\mathbb R^d$-random vector $\g X$ when at least one of its marginal is large. Typically, it is assumed that $\g X$ is regularly varying on $C_{||\cdot||_\infty}=\{\g x\in\mathbb R^d:||\g x||_\infty\},$ 
i.e., $t^{-1}\g X\mid ||\g X||_\infty\geq t \overset{w}{\Rightarrow} \g Y,$ and that $\g Y$ admits a probability density $f_{\g Y},$ known to be homogeneous. This excludes the case of asymptotically independent marginals \cite{Resnick1987Extreme}, which is to be treated separately. The tail distribution of $\g X$ is commonly inferred by censoring the marginals whose absolute values fall under some large threshold \cite{Ledford1996}, requiring a model for $f_{\g Y}$ and its censored versions. In this section, we first suggest a general procedure to obtain parametric homogeneous densities whose multivariate marginals and censored densities are known. Second, we explain how to simplify $f_{\g Y}$ when $d$ is large using graphical models.  

We start by asking wether the marginals of a regularly varying vector are also regularly varying and, in this case, if the marginals of the limit are the limits of the marginals. The following lemma provides an answer. Let $A\subseteq \{1,\ldots,d\},$ $C^{A}_{||\cdot||_\infty}:=\{\g x_A\in \mathbb R^{|A|}:||\g x_A||_\infty\geq 1\}$ and $v\not \equiv 0.$
\begin{lemma}[Regular Variation of the Marginals]\label{le:marginalsRV}
Suppose that 
\begin{align}\label{eq:hyp_xmaxc}
\pr(b X_i\geq t\mid \,||\g X||_\infty\geq t) \rightarrow c_{b i}>0,\quad b\in \{-1,1\},\; i=1,\ldots, d.
\end{align}
 If $\g X\in\text{RV}_{-\alpha}(\g Y)$ on $C_{||\cdot||_\infty},$ then $\g X_A\in \text{RV}_{-\alpha}(\g Y_A\mid \,||\g Y_A||_\infty \geq 1)$ on $C^A_{||\cdot||_\infty}.$ If $f_{\g X} \in \text{RV}_{-\alpha}(v)$ and the sequence is either monotone and $v$ is integrable or is dominated by an integrable function, then $f_{t^{-1} \g X_A\mid\, ||\g X_A||_\infty \geq t} \rightarrow f_{\g Y_A\mid \,||\g Y_A||_\infty\geq 1}.$
\end{lemma}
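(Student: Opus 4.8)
The plan is to reduce everything to the one-component theory of Section~\ref{par:ocrvDistr} via the radial-coordinate correspondence of Proposition~\ref{pro:qRVoRV_a}, and to treat the distributional and density statements by the same bookkeeping. Write $R=||\g X||_\infty$ and, for a fixed nonempty $A\subseteq\{1,\dots,d\}$, $R_A=||\g X_A||_\infty$. The key elementary fact is $R_A\le R\le \max_i |X_i|\le$ (for the relevant indices) a bounded multiple of $R_A$ \emph{on the event that the max over $\g X$ is attained in $A$}, but more usefully: $R_A\le R$ always, so $\{R_A\ge t\}\subseteq\{R\ge t\}$, and one has the exact decomposition
\begin{align*}
\pr(t^{-1}\g X_A\in B\mid R_A\ge t)=\frac{\pr(t^{-1}\g X_A\in B,\;R_A\ge t)}{\pr(R_A\ge t)}
=\frac{\pr(t^{-1}\g X_A\in B,\;R_A\ge t\mid R\ge t)}{\pr(R_A\ge t\mid R\ge t)}.
\end{align*}
By hypothesis \eqref{eq:hyp_xmaxc}, $\pr(b X_i\ge t\mid R\ge t)\to c_{bi}>0$, and since $\{R_A\ge t\}=\bigcup_{i\in A,\,b}\{bX_i\ge t\}$, inclusion--exclusion plus \eqref{eq:hyp_xmaxc} gives $\pr(R_A\ge t\mid R\ge t)\to c_A$ for some constant $c_A$ which is strictly positive (it dominates $\max_{i\in A,b}c_{bi}>0$). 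This secures the denominator.

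For the weak-convergence claim: from $\g X\in\text{RV}_{-\alpha}(\g Y)$ on $C_{||\cdot||_\infty}$ we have $t^{-1}\g X\mid R\ge t\overset{w}{\Rightarrow}\g Y$, hence the pair $(t^{-1}\g X_A,\mathbf 1\{R_A\ge t\})\mid R\ge t$ converges weakly to $(\g Y_A,\mathbf 1\{||\g Y_A||_\infty\ge 1\})$, provided the boundary $\{||\g Y_A||_\infty=1\}$ is a $\pr$-null set for $\g Y$; this is where I would need a small argument, but the homogeneity of $\g Y$ (Proposition~\ref{pro:qRVoRV_a}) forces $\pr(||\g Y_A||_\infty=1)=0$ unless $\g Y$ charges that sphere with positive mass, and the standard extreme-value fact that the exponent measure has no atoms on such spheres — equivalently \eqref{eq:hyp_xmaxc} guaranteeing the limit is ``spread out'' — rules this out. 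Granting that, the continuous mapping theorem applied to $(\g x,s)\mapsto \g x$ restricted to $\{s=1\}$, together with the denominator limit $c_A>0$ just established, yields $t^{-1}\g X_A\mid R_A\ge t\overset{w}{\Rightarrow}\g Y_A\mid(||\g Y_A||_\infty\ge 1)$. To upgrade this to $\g X_A\in\text{RV}_{-\alpha}(\cdot)$ in the sense of Proposition~\ref{pro:qRVoRV_a}, I still need $r(\g Y_A)=||\g Y_A||_\infty\not\sim\delta_1$; but if it were a point mass at $1$ then $t^{-1}R_A\mid R_A\ge t\Rightarrow\delta_1$, contradicting that $R_A\ge\max(|X_i|:i\in A)$ inherits a genuine Pareto-type tail from the $X_i$ via \eqref{eq:hyp_xmaxc} and $R\in\text{RV}_{-\alpha}$. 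Then Proposition~\ref{pro:qRVoRV_a} (applied in dimension $|A|$) packages the limit as the homogeneous measure it claims.

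For the density statement: under $f_{\g X}\in\text{RV}_{-\alpha}(v)$ with the stated domination or monotonicity, \eqref{eq:convDensityfXMonDom} gives $f_{t^{-1}\g X\mid R\ge t}\to f_{\g Y}$ pointwise and in $L^1$ (Scheff\'e). Marginalizing over the coordinates in $A^c$ commutes with the limit by the same dominating/monotone hypothesis and Tonelli/Fubini, so $f_{t^{-1}\g X_A\mid R\ge t}\to \int f_{\g Y}\,d\g y_{A^c}=:g$. Then restricting to $\{R_A\ge t\}$ multiplies by the indicator $\mathbf 1\{||\g x_A||_\infty\ge 1\}$ and renormalizes by $\pr(R_A\ge t\mid R\ge t)\to c_A>0$; since $g$ integrated over $C^A_{||\cdot||_\infty}$ equals exactly $c_A=\pr(||\g Y_A||_\infty\ge 1)$, the normalized limit is $g(\g x_A)\mathbf 1\{||\g x_A||_\infty\ge1\}/c_A=f_{\g Y_A\mid ||\g Y_A||_\infty\ge1}$, as claimed. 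The main obstacle is the bookkeeping around the set $\{R_A\ge t\}$ versus $\{R\ge t\}$ — in particular verifying the boundary $\{||\g Y_A||_\infty=1\}$ carries no mass so that the weak limit passes through the indicator — and this is exactly the role played by assumption~\eqref{eq:hyp_xmaxc}; everything else is routine once that is in hand.
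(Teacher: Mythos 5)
Your proposal follows the same route as the paper's proof: reduce the conditioning event $\{||\g X_A||_\infty\geq t\}$ to the larger event $\{||\g X||_\infty\geq t\}$, use \eqref{eq:hyp_xmaxc} to show the resulting denominator $\pr(||\g X_A||_\infty\geq t\mid ||\g X||_\infty\geq t)$ has a positive limit, identify the numerator's limit via the weak convergence $t^{-1}\g X\mid ||\g X||_\infty\geq t\overset{w}{\Rightarrow}\g Y$ (the boundary issue you worry about is handled, as in the paper, by restricting to $\nu$-continuity sets), and obtain the density statement by passing the limit through the marginalizing integral via monotone or dominated convergence. The argument is correct; the extra steps you add (continuous mapping on the indicator pair, non-degeneracy of $||\g Y_A||_\infty$) are elaborations of points the paper leaves implicit.
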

\begin{proof}
Under (\ref{eq:hyp_xmaxc}), $\pr(||\g X_A||_\infty \geq t\mid \,||\g X||_\infty \geq t )\rightarrow c>0.$ For all $\nu$-continuous Borel set $B,$
$${\pr(\g X_A\in t B)\over \pr(||\g X_A||_\infty\geq t)}= {\pr(\g X_A\in t B)\over \pr(||\g X||_\infty\geq t)}{\pr(||\g X||_\infty\geq t)\over 
\pr\{\g X\in t(C^A_{||\cdot ||_\infty}\times \mathbb R^{|A^C|})\}},$$ and converges to $\pr(\g Y_A\in B\mid \,||\g Y_A||_\infty \geq 1),$ proving the first part of the lemma. The second part is a direct consequence of the monotone or dominated convergence theorem.
\end{proof}

This result is specific to the Pareto decay ($T\equiv \text{id}$): $\bar F(x,y)=2/(e^{|x|}+e^{|y|})$ is homogeneous on $\mathbb R^2$ w.r.t.\ addition but its marginal $\bar F_X(x)=2/(e^{|x|}+1),$ although regularly varying, is not homogeneous.

\begin{example}[Limiting Density of a Multivariate Student Distribution]\label{ref:studentLimit}
Let $\g X$ be multivariate Student $t$-distributed with mean $\g 0$ and dispersion matrix $\Sigma\subset \mathbb R^{d\times d}.$ The sequence $f_{\g X}(t\g x)/ f_{\g X}(t\g 1)$ is monotone and converges to $v(\g x)/v(\g 1)$ for $$v(\g x)=(\g x^T \Sigma \g x)^{-(\nu+d)/2},$$ which integrates to $c<\infty$ on $C_{||\cdot||_\infty}.$ As explained in (\ref{eq:convDensityfXMonDom}), $f_{t^{-1}\g X\mid \,||\g X||_\infty\geq t}(\g y)\rightarrow c^{-1} v(\g y)=: f_{\g Y}(\g y),$ thus $\g X\in\text{RV}_{-\alpha}(\g Y).$ In addition, the censored limiting density is the limit of the censored density, that is $f_{t^{-1}\g X_A, |\g X_A|<t\mid \,||\g X||_\infty \geq t}\rightarrow f_{\g Y_A,|\g Y_A|<1},$ and Lemma~\ref{le:marginalsRV} ensures that the marginals of the limit are the limits of the marginals, i.e., $f_{t^{-1}\g X_A\mid ||\g X_A||_\infty \geq t}\rightarrow f_{\g Y_A}.$ These quantities can thus be computed from the censored densities and marginals of the multivariate Student by passing to the limit. Up to a constant and a transformation of the univariate marginals, $f_{\g Y}$ coincides with the density of the exponent measure of the extremal-$t$ distribution derived by \cite{ribatet2013spatial}. 
\end{example}

As the previous example suggests, we can obtain parametric families for homogeneous distributions by computing limits of well-known multivariate distributions that are regularly varying with decay $T.$ We now focus on a complementary approach for modeling $f_{\g Y}$ using graphical models. In short, graphical models offer a way to simplify a joint density by assuming some conditional independence between the marginals. Conditional independence --- and dependence --- between marginals is only meaningful on a product space \cite{Dawid2001}. Since $\g Y$ has values in $C_{||\cdot||_\infty},$ the range of $Y_i$ depends on the other marginals.  To remedy this problem, we work with the censored vector 
\begin{align}
\g Y^C= \left\{
 \begin{array}{ll}
(Y_1 1_{|Y_1|\geq 1},\ldots, Y_d 1_{|Y_d|\geq 1}) &\text{ with probability } p,\\
\g 0 & \text{ with probability } 1-p,
\end{array}
\right.
\end{align}
whose values lie in $F^d=\{(\infty,-1]\times \{0\}\times [1,\infty)\}^d,$ where $p=\pr(||\g X||_\infty\geq u)\in (0,1)$ for some large threshold $u.$ This stays in line with statistical inference, which typically estimates a censored version of the limiting distribution.

The Hammersley-Clifford theorem states that if $f_{\g Y^C}$ is positive, it factorizes w.r.t.\ a graph $\mathcal G=(\{1,\ldots,d\},E)$ if and only if $Y_i^C \ci Y_j^C  \mid \g Y^C _{\{1,\ldots,d\}\setminus\{i,j\}},$ $\forall (i,j)\notin E$ \cite{Lauritzen}. The latter denotes conditional independence between $Y^C_i$ and $Y^C_j$ given the rest of the vector. Let us illustrate an application of the theorem. 
 
\begin{example}[Factorization of the Limiting Density]\label{ex:CI3hom}
Let $\g Y$ have values in $\mathbb R_+^3$ with $f_{\g Y_{12}\,\mid\, ||\g Y_{12}||_\infty\geq 1}(x,y)=\frac{4}{3}(x+y)^{-3}=f_{\g Y_{23}\,\mid \,||\g Y_{23}||_\infty\geq 1}(x,y)$ and $p=\pr(||\g Y_{12}||_\infty\geq 1).$ From the Hammersley-Clifford theorem, $Y^C_1  \ci Y^C_3 \mid  Y^C_2$ if and only if
$$f_{\g  Y^C}(x,y,z)={f_{\g Y^C_{12}}(x,y)f_{\g Y^C_{23}}(y,z)\over f_{Y^C_{2}}(y)},\quad  \forall (x,y,z)\in (\{0\}\cup [1,\infty))^3,$$
where $f_{\g Y^C_{12}}(x,0)=\frac{2}{3} p \{x^{-2}-(x+1)^{-2}\},$ $f_{Y^C_2}(x)=\frac{2}{3} p x^{-2},$ etc.
\end{example}

From a statistical perspective, the distribution of $\g Y^C$ is to be estimated from samples of $\g X.$ To that end, we consider a sequence $\g X^C_t$ which is function of $\g X$ and satisfies $\g X^C_t\overset{w}{\Rightarrow} \g Y^C$ if $\g X\in \text{RV}_{-\alpha}(\g Y).$ Crucially, the sequence of marginals $(\g X_{t}^C)_D$ for $D\subseteq \{1,\ldots,d\}$ coincides with the censored sequence $(\g X_{t,D})^C$ converging to $\g Y_{t,D}^C.$ It is defined for all $\g x\in \mathbb R^{|D\cap A|}$ s.t.\ $|\g x|\geq 1,$ $A\subseteq D\setminus \emptyset,$ by 
\begin{multline}\label{eq:YC}
 \pr( \g \sigma_{\g x} \g X^C_{t,D\cap A}\geq |\g x|, \g X^C_{t,D\cap A^c}=\g 0)   =
     p\, {\pr(\g \sigma_{\g x}  \g X_{D\cap A}\geq t |\g x|, |\g X_{D\cap A^C}|< t )\over  \pr(||\g X||_\infty \geq t)},
     \end{multline}
and $\pr( \g X^C_{t,D}=\g 0)    =  1-p \pr(||\g X_D||_\infty \geq t\mid ||\g X||_\infty \geq t),$ where $\g \sigma_{\g x}$ denotes the sign of $\g x.$ Both $\g X_t^C$ and $\g Y^C$ admit probability densities $f_{\g X_t^C}$ and $f_{\g Y^C}$ w.r.t.\ the measure $\mu_0(A_1\times \ldots\times A_d):=\sum_{D\subseteq \{1,\ldots,d\}}\lambda^{|D|}\left(\prod_{i\in D} A_i\right) \delta_{\g 0}^{|D^c|}\left(\prod_{i\in D^c} A_i\right)$
on $F^d,$ where $\lambda^d$ is the $d$-dimensional Lebesgue measure and $\delta^d_{\g 0}$ the Dirac measure on $\g 0\in\mathbb R^d.$ 

We want to find conditions on $\g X$ that impose a factorization of $f_{\g Y^C}.$ This brings us to extend the Hammersley-Clifford theorem for densities that factorize in the limit. Conditional independence is translated into {\em asymptotic conditional independence\/}, a notion little mentioned in the literature. 


\section{Asymptotic Graphical Models}\label{par:agm}
Let $\g X_t$ for $t\geq 1$ and $\g X$ be $\mathbb R^d$-valued random vectors with almost everywhere (a.e.) continuous probability densities $f_{\g X_t}$ and $f_{\g X}$ w.r.t.\ a base measure $\mu_0,$ typically the Lebesgue, the counting measure, or a combination of the two. For disjoint subsets $A,B,C\subseteq \{1,\ldots,d\},$ we call the marginals $\g X_{t,\,A}$ and $\g X_{t,\,B}$ {\em asymptotically conditionally independent\/} with respect to $\g X_{C,\,t},$ written $\g X_{t,\,A}\,\aci\, \g X_{t,\,B} \mid \g X_{t,\,C},$ if
\begin{align}\label{def:asyCondIndep}
  (f_{\g X_{t,\,ABC}} \,f_{\g X_{t,\,C}} -f_{\g X_{t,\,AC}}\,f_{\g X_{t,\,BC}} )d\mu_0
\overset{w}{\Rightarrow} 0.
\end{align}
Here, $f_{\g X_t}d\mu_0 \overset{w}{\Rightarrow}$ stands for weak convergence of $\g X_t,$ i.e., the convergence of $\int g f_{\g X_t} d\mu_0$ for every $g$ a.e.\ continuous and bounded. When $\g X_t=\g X_1,$ $\forall t,$ (\ref{def:asyCondIndep}) coincides with conditional independence of random variables \cite{Lauritzen}. 

Let $\mathcal G=(V,E)$ be an undirected graph with set of nodes $V=\{1,\ldots,d\}$ and set of edges $E.$ We say that $\g X_t$ satisfies the {\em asymptotic Markov properties\/} according to $\mathcal G$ if $\{i,j\}\notin E$ implies $X_{t,i} \aci X_{t,j}\mid \g X_{t,\,V \setminus \{i,j\}}.$ Moreover, we say that $f_{\g X_t}$ {\em asymptotically factorizes\/} w.r.t.\ $\mathcal G$ if $\g X_t \overset{w}{\Rightarrow}\g X$ and $f_{\g X}$ factorizes w.r.t.\ $\mathcal G.$

\begin{theorem}[Asymptotic Hammersley-Clifford Theorem]\label{thm:equivAGM_weak}
Suppose $\g X_t \overset{w}{\Rightarrow} \g X,$ $f_{\g X}>0$ and $f_{\g X_A}$ bounded $\forall A\subseteq V$ a.e. The error $\epsilon_t:=f_{\g X_t}-f_{\g X}$ satisfies 
\begin{align}\label{eq:errorTermHyp}
(\epsilon_{t,V} \epsilon_{t,V\setminus\{i,j\}}- \epsilon_{t,V\setminus\{i\}}\,\epsilon_{t,V\setminus\{j\}})d\mu_0 \;\overset{w}{\Rightarrow} 0,\quad \forall (i,j)\in E,
\end{align}
if and only if the following are equivalent:
        \begin{enumerate}[(i)]
\item \label{it:af} $f_{\g X_t}$ asymptotically factorizes w.r.t.\ $\mathcal G,$
 \item \label{it:gm} $\g X$ satisfies the Markov properties according to $\mathcal G,$
\item \label{it:agm} $\g X_t$ satisfies the asymptotic Markov properties according to $\mathcal G.$
 \end{enumerate}
 \end{theorem}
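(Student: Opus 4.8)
My plan is to isolate the single substantive equivalence and reach it through an algebraic expansion of the conditional-independence ``defect.'' First, because $\g X_t\overset{w}{\Rightarrow}\g X$ is part of the hypotheses, statement (\ref{it:af}) says exactly that $f_{\g X}$ factorizes according to $\mathcal G$; since $f_{\g X}>0$, the classical Hammersley--Clifford theorem \cite{Lauritzen} makes this equivalent to (\ref{it:gm}), with no use of the error hypothesis, and positivity also collapses the pairwise, local and global Markov properties, so (\ref{it:gm}) reduces to ``$X_i\ci X_j\mid\g X_{V\setminus\{i,j\}}$ at every non-edge $\{i,j\}$.'' Hence the theorem comes down to: (\ref{eq:errorTermHyp}) holds if and only if, for each relevant pair $\{i,j\}$, that limiting pairwise conditional independence is equivalent to its approximate counterpart $X_{t,i}\,\aci\,X_{t,j}\mid\g X_{t,V\setminus\{i,j\}}$, i.e.\ to the weak vanishing of the quantity in (\ref{def:asyCondIndep}) with $A=\{i\}$, $B=\{j\}$, $C=V\setminus\{i,j\}$.

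The engine is the following expansion. Fix $\{i,j\}$, write $\epsilon_{t,A}=f_{\g X_{t,A}}-f_{\g X_A}$, and set
\[
 D_t:=f_{\g X_{t,V}}\,f_{\g X_{t,V\setminus\{i,j\}}}-f_{\g X_{t,V\setminus\{i\}}}\,f_{\g X_{t,V\setminus\{j\}}}\;=\;D_\infty + C_t + R_t ,
\]
where $D_\infty$ is the same expression in the limit densities, $C_t$ collects the four terms linear in the $\epsilon_{t,A}$, and $R_t=\epsilon_{t,V}\,\epsilon_{t,V\setminus\{i,j\}}-\epsilon_{t,V\setminus\{i\}}\,\epsilon_{t,V\setminus\{j\}}$ is precisely the quadratic remainder of (\ref{eq:errorTermHyp}). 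By the continuous mapping theorem $\g X_{t,A}\overset{w}{\Rightarrow}\g X_A$, i.e.\ $\epsilon_{t,A}\,d\mu_0\overset{w}{\Rightarrow}0$, for every $A$. The crucial lemma is that $C_t\,d\mu_0\overset{w}{\Rightarrow}0$ unconditionally: a term such as $\epsilon_{t,V}\,f_{\g X_{V\setminus\{i,j\}}}$ is handled at once, since for bounded a.e.\ continuous $g$ the product $g\,f_{\g X_{V\setminus\{i,j\}}}$ is again bounded and a.e.\ continuous; a term such as $f_{\g X_V}\,\epsilon_{t,V\setminus\{i,j\}}$ is handled by first integrating out the coordinates $i,j$ (Tonelli is legitimate because densities have unit $L^1$-norm), leaving the kernel $\g x_{V\setminus\{i,j\}}\mapsto\int g\,f_{\g X_V}\,d\mu_{0,\{i,j\}}$, which is bounded by $\|g\|_\infty\|f_{\g X_{V\setminus\{i,j\}}}\|_\infty$ and, by a generalized dominated-convergence argument, a.e.\ continuous, hence kills $\epsilon_{t,V\setminus\{i,j\}}\,d\mu_0$; the two mixed terms go the same way after integrating out one coordinate. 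The boundedness of all marginals also makes $D_\infty$ $\mu_0$-integrable.

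Granting the lemma, the forward implication is immediate: under (\ref{eq:errorTermHyp}) one has $(C_t+R_t)\,d\mu_0\overset{w}{\Rightarrow}0$, so $D_t\,d\mu_0\overset{w}{\Rightarrow}0$ iff $D_\infty\,d\mu_0\overset{w}{\Rightarrow}0$, and a fixed $\mu_0$-integrable $D_\infty$ with that property must vanish $\mu_0$-a.e., which by the density criterion for conditional independence is exactly $X_i\ci X_j\mid\g X_{V\setminus\{i,j\}}$; ranging over the relevant pairs gives (\ref{it:agm})$\Leftrightarrow$(\ref{it:gm}), and with the unconditional (\ref{it:af})$\Leftrightarrow$(\ref{it:gm}) the three statements are equivalent. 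For the converse I would argue by contraposition: if (\ref{eq:errorTermHyp}) fails at some pair, then along a subsequence $R_t\,d\mu_0$ stays away from $0$ while $C_t\,d\mu_0\to0$, so $D_t\,d\mu_0$ converges (if at all) to something other than $D_\infty\,d\mu_0$; when the limiting defect $D_\infty$ vanishes at that pair this forces $\g X_t$ to violate the corresponding asymptotic Markov property although $\g X$ does not, so the two cannot be equivalent, and in the remaining case ($D_\infty\neq0$ there) one checks directly whether $D_t\,d\mu_0$ vanishes to exhibit the broken implication.

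The main obstacle is the cross-term lemma, and inside it the assertion that the partial integral of a bounded a.e.\ continuous integrand against part of $\mu_0$ is again a.e.\ continuous. The right tool is a Pratt/Scheff\'e-type dominated convergence: the integrand converges a.e., and the dominating pieces $\|g\|_\infty f_{\g X_V}(\g z_n,\cdot)$ have $L^1$-norms $\|g\|_\infty f_{\g X_{V\setminus\{i,j\}}}(\g z_n)$ that converge (using continuity of the lower-dimensional marginal, itself an a.e.\ property), so the generalized DCT applies even though $f_{\g X_V}$ is only bounded, not uniformly dominated. When $\mu_0$ genuinely mixes Lebesgue and Dirac factors, as in Section~\ref{par:me}, this has to be run stratum by stratum --- on an atomic coordinate the partial integral is an evaluation and continuity is inherited --- and one must keep careful track of which coordinates sit on which factor and of stability of ``$\mu_0$-a.e.'' under these partial integrations. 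Everything else --- the classical Hammersley--Clifford input, the fact that $\eta\,d\mu_0\overset{w}{\Rightarrow}0$ forces $\eta=0$ $\mu_0$-a.e.\ for fixed integrable $\eta$, and a reduction to compactly supported test functions by tightness --- is routine.
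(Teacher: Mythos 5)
Your proof follows the paper's own argument essentially verbatim: the classical Hammersley--Clifford theorem disposes of (\ref{it:af})$\Leftrightarrow$(\ref{it:gm}), and your decomposition $D_t=D_\infty+C_t+R_t$ --- with the linear cross terms $C_t$ killed by weak convergence of the $\epsilon_{t,A}$ tested against bounded a.e.\ continuous kernels, and the quadratic remainder $R_t$ being exactly hypothesis (\ref{eq:errorTermHyp}) --- is precisely the paper's display (\ref{eq:weakeqerror}). You merely spell out the cross-term lemma (and the converse direction, which the paper's proof leaves implicit and your own case analysis does not fully close either) in more detail than the paper does; note only that the quantifier in (\ref{eq:errorTermHyp}) must be read over non-edges, as your ``relevant pairs'' correctly assume.
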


\begin{proof}
(\ref{it:af}) $\Leftrightarrow$ (\ref{it:gm}): apply Hammersley and Clifford theorem. (\ref{it:gm}) $\Leftrightarrow$ (\ref{it:agm}): it suffices to show relations of the form $f_{X Y\g Z}f_{\g Z}=f_{X\g Z} f_{Y\g Z}$ a.e.\ if and only if $(f_{X_t Y_t \g Z_t}f_{\g Z_t}-f_{X_t \g Z_t} f_{Y_t \g Z_t})d\mu_0 \overset{w}{\Rightarrow} 
 0.$ Rewrite the latter as
\begin{align}\label{eq:weakeqerror} 
(f_{X Y\g Z}f_{\g Z}-f_{X\g Z} f_{Y\g Z})d\mu_0 + (\epsilon_{X_t,Y_t,\g Z_t}\epsilon_{\g Z_t}- \epsilon_{X_t,\g Z_t}\epsilon_{Y_t,\g Z_t}
)d\mu_0 \\
+(\epsilon_{X_t,Y_t,\g Z_t} f_{\g Z}+\epsilon_{\g Z_t} f_{X,Y,\g Z}-\epsilon_{X_t,\g Z_t}f_{Y,\g Z}-\epsilon_{Y_t,\g Z_t}f_{X,\g Z})d\mu_0 \overset{w}{\Rightarrow} 0.\nonumber
\end{align}
 Since $(X_t,Y_t,\g Z_t)\overset{w}{\Rightarrow} (X,Y,\g Z),$ $\epsilon_t$ converges weakly to $0$ and so does the last term in (\ref{eq:weakeqerror}) because $f_{X,Y,\g Z}$ and its marginals are a.e.\ continuous and bounded. Moreover, the middle term vanishes by assumption (\ref{eq:errorTermHyp}), proving the equivalence.
  \end{proof}
  
 As mentioned in \cite{Lauritzen}, Example 3.11, the Markov property is in general not satisfied under weak convergence; Theorem~\ref{thm:equivAGM_weak} provides conditions under which it is. If $f_{\g X_t}\rightarrow f_{\g X}$ and the sequence is dominated by an integrable function, then $f_{\g X_{t,A}}\rightarrow f_{\g X_A}$ for all $A\subseteq \{1,\ldots,d\}.$ Hence, $f_{\g X}$ need not be bounded and, crucially, (\ref{eq:errorTermHyp}) holds directly. Similarly, if $f_{\g X_t}\rightarrow f_{\g X}$ in $L_2,$ then $f_{\g X_{t,A}}\rightarrow f_{\g X_A}$ in $L_2$ by Jensen's inequality, and we apply Hölder's inequality to show that (\ref{eq:errorTermHyp}) is satisfied.


\section{Asymptotic Graphical Modeling of Exceedances}\label{par:agme} 
We explain how to model the exceedances of a random vector $\g X$ with probability density $f_{\g X}$ using asymptotic graphical models. Let $\mathcal G$ be a decomposable graph with set of cliques $\mathcal C$ and set of intersections between them $\mathcal D.$ Suppose that $f_{\g X}\in\text{RV}_{-\alpha}(v)$ for $v \not \equiv 0$ and that the sequence is either monotone and $v$ is integrable or dominated by an integrable function. As seen in (\ref{eq:convDensityMonDom}), $f_{t^{-1}\g X\,\mid\,||\g X||_\infty \geq t}\rightarrow f_{\g Y}$ and we assume that the probability density $f_{\g Y}$ is positive. It follows that $f_{\g X^C_t}\rightarrow f_{\g Y^C}>0,$ where $\g X^C_t$ and $\g Y^C$ are defined in Section \ref{par:me}. From Theorem~\ref{thm:equivAGM_weak}, $\g X^C_t$ satisfies the asymptotic Markov properties according to $\mathcal G$ if and only if $f_{\g Y^C}$ factorizes w.r.t.\ $\mathcal G,$ i.e.,
\begin{align}\label{eq:fact_f}
f_{\g Y^C}(\g y)={\prod_{C\in \mathcal C} f_{\g Y^C_{C}}(\g y_{C})\over \prod_{D\in \mathcal D} f_{\g Y^C_{D}}(\g y_{D})}, \quad \g y\in F^d,
\end{align}
Hence, for all $A\subseteq \{1,\ldots,d\}$ and $\g y_A$ s.t.\ $|\g y_A|\geq \g 1,$
\begin{align}\label{eq:fy}
f_{\g Y_A, |\g Y_A^c|<  1}(\g y_A)={\prod_{C\in \mathcal C: C\cap A\neq \emptyset} f(\g y_{C\cap A}) \over \prod_{D\in\mathcal D: D\cap A\neq \emptyset} f(\g y_{D\cap A})} \; {\prod_{C\in\mathcal C } q_{C,A}\over \prod_{D\in\mathcal C } q_{D,A} },
\end{align}
where $f(\g y_{S\cap A})$ denotes the density of $\g Y_{S\cap A},\g Y_{S\cap A^c}<  1\mid \,||\g Y_S||_\infty \geq 1$ and
$$
q_{S,A}=\left\{
\begin{array}{ll}
p_S := p \,\pr(||\g Y_S||_\infty \geq 1) & \text{ if } S\cap A\neq \emptyset, \\
1-p_S & \text{ else, }\\
\end{array}
\right.
$$
for all $S\in \mathcal C\cup \mathcal D.$ By positivity of $f_{\g Y},$ (\ref{eq:hyp_xmaxc}) is satisfied and thus Lemma~\ref{le:marginalsRV} gives $\g X_S\in \text{RV}_{-\alpha}(\g Y_S\mid \,||\g Y_S||_\infty \geq 1)$ on $C_{||\cdot||_\infty}^S$ with censored limiting density $f(\g y_{S\cap A}).$ This is a key step as it means that the latter can be estimated from samples of $\g X_S.$ 

This suggests the following strategy to model exceedances of an i.i.d.\ sample of $\g X.$ First, transform the positive and negative part of each marginal that exceeds a large threshold to unit Pareto to obtain the censored vector $\g X^C_{1}.$ Second, select a decomposable graph $\mathcal G$ by testing conditional independence between the marginals of $\g X^C_{1}=$ $(X_1 1_{|X_1|\geq 1},\ldots, X_d 1_{|X_d|\geq 1}).$ Third, for all $S\in \mathcal C\cup \mathcal D,$ select a model for the homogeneous density $f_{\g Y_S\mid\, ||\g Y_S||\geq 1}$ with unit Pareto marginals; estimate it using a sample from $\g X^C_{1,S},$ and estimate $p_S\approx \pr(||\g X_{1,S}||_\infty\geq 1).$ This determines the asymptotic approximation of $f_{\g X^C_1}$ in (\ref{eq:fy}).

In low dimensions, there is a rich class of models for extreme-value distributions \cite{Segers2010}. If the later have Fréchet marginals, we have seen that their exponent measure density, when it exists, is homogeneous and thus provides a model for $f_{\g Y_S\,\mid\,||\g Y_S||_\infty\geq 1}.$ In a parametric approach, a refinement is to impose the parameters of $f_{\g Y_C}(\cdot;\g\theta_C)$ to be consistent with the ones of $f_{\g Y_D}(\cdot;\g\theta_D)$ whenever $D\subset C$ and to estimate them from sufficient statistics; Example~\ref{ref:studentLimit} showed a way to find consistent parametric families. An extreme-value distribution often used in practice is the Hüsler-Reiss \cite{Husler1989,Engelke2015}.

\begin{example}[Hüsler-Reiss and Conditional Independence]
The Hüsler-Reiss exponent measure has homogeneous probability density $f_{\g Y\mid Y_k\geq 1}(\g y)= y_k^{-2} \phi(\g y_{-k}\mid y_k)$ on $C_k=\{\g y\geq \g 0: y_k\geq 1\},$ $\forall k=1,\ldots,d,$ where $\phi$ is the density of the multivariate log-normal distribution on $\mathbb R^{d-1}$ with mean $(\log y_k-\frac{1}{2}\Gamma_{jk})_{j\neq k}$ and covariance matrix $\Sigma_\vartheta^{-1},$ where $\Sigma_\vartheta =\frac{1}{2} \{\Gamma_{ik}+\Gamma_{jk}-\Gamma_{ij}\}_{2\leq i,j\leq d},$ and $\Gamma_{ij}=\Sigma_{ii}+\Sigma_{jj}-2\Sigma_{i,j}$ is the incremental variance defined by a correlation matrix $\Sigma\subset \mathbb R^{d\times d}.$ The marginals $\g Y_A\mid \,||\g Y_A||_\infty \geq 1$ and the censored densities can be computed. Moreover, conditional independence corresponds to a specific constraint on the covariance matrix, analogously to the Gaussian case:
\begin{align}\label{eq:CI_HR}
Y_i\ci Y_j\mid \g Y_{\{1,\ldots,d\}\setminus \{i,j\}} \; \text{ on } C_k\quad  \iff \quad  \Sigma_{\vartheta,ij}=\Gamma_{ij}-\Gamma_{ik}-\Gamma_{jk}=0.
\end{align}
\end{example}

$ $

A factorization of the homogeneous limiting density induces a specific factorization of its angular density. As explained in the next example, this gives a way to build a high-dimensional angular density satisfying the marginal constraints.

\begin{example}[Factorization of the Angular Density]
In (\ref{eq:fy}), $f_{\g Y^C}$ is expressed in terms of homogeneous lower dimensional densities that can be written as
$$f_{\g Y_{ S}\mid\, ||\g Y_{S}||_\infty \geq 1}(\g y_{S})= k  r(\g y_{S})^{-\alpha-1}h_{S}\{\g \theta(\g y_{S})\}J_{\phi_{S}}(\g y_{S}),\quad S\in \mathcal C\cup \mathcal  D,$$
where $\phi_S:\g y_S\mapsto (r,\g\theta)$ is the radial system of coordinates defined in (\ref{eq:defnRadialCoord}) with Jacobian determinant $J_{\phi_S}>0,$ $h_S$ is the angular probability density on $\Omega_S,$ and $k$ the normalizing constant. When no marginal is censored, $f_{\g Y^C}$ is homogeneous and its angular density reads
\begin{align}\label{eq:fact_h}
h(\g \theta)={k\over J_\phi(\g\theta)}
 {\prod_{C\in \mathcal C} \; r(\g \theta_{C})^{-\alpha-1}  \;h_{C}(\g\theta_{C})\, J_{\phi_{C}}(\g \theta_{C})   \over \prod_{D\in \mathcal D}  \; r(\g \theta_{D})^{-\alpha-1} h_{D}(\g\theta_{D})\,  J_{\phi_{D}}(\g \theta_{D})}.
\end{align}
The censored case goes similarly. We now derive the constraint that a parametric family for angular densities must satisfy to be consistent. Consistency means that for all $\tilde A= A\cup \{i\}\subseteq \{1,\ldots,d\},$ 
 $$f_{\g Y_{A}\mid \,||\g Y_A||_\infty\geq 1}(\g y_{A})= k \int_{\mathbb R} f_{\g Y_{\tilde A}}(\g y_{\tilde A})dy_i,$$
and translates into 
\begin{align}\label{eq:h_consist}
h_{A}(\g\theta)= k \int_{\mathbb R} r\{\g \theta^{-1}_{A}(\g \theta), z\}^{-\alpha-1}\,h_{\tilde A}\left[\g\theta\{\g \theta^{-1}_{A}(\g \theta), z\}\right]\, {J_{\phi_{\tilde A}}\{\g \theta^{-1}_{A}(\g\theta),z\}\over  J_{\phi_{A}}\{\g \theta^{-1}_{A}(\g \theta)\}}d z,
\end{align}
 by a change of variable $z= y_i/r(\g y_{A})$ and setting $\g y= r(\g y_A) \theta^{-1}_{A}(\g \theta).$ In particular, when $r(\g x)=||\cdot||_1,$ $\g\theta(\g x)=\g x/r(\g x),$ and $\g x\geq \g 0,$ (\ref{eq:h_consist}) becomes 
$$h_{A}(\g \theta)= k \int_0^\infty (1-w)^{\alpha+d-2} h_{\tilde A}\{(1-w)\g\theta,w\}dw.$$
 using the change of variable $w=z/(z+1).$ \cite{BallaniSchlather} showed that the asymmetric logistic model satisfies this constraint and is thus a consistent family  --- its censored density is however unattractive. 
\end{example}

Asymptotic graphical modeling for extremes necessitates testing asymptotic conditional independence, which is difficult in a non-parametric setting. As a first approximation, we can easily test conditional independence of the binary vector $\g B=(1_{| X_{1}|\geq t},\ldots, 1_{| X_{d}|\geq t})$ or a more refined discretization \cite{Nagarajan2013}. Further efforts are needed to build a test for tail conditional independence in the parametric continuous case --- for instance, by relying upon (\ref{eq:CI_HR}). Lastly, we mention the success of the Gaussian graphical lasso \cite{Friedman2008}, which has been able to tackle high-dimensional problems by imposing sparsity of the inverse covariance matrix. Future work could allow a similar approach for the Hüssler-Reiss distribution.

\paragraph*{Acknowledgements}
The first author is grateful to Anthony Davison for suggesting graphical modeling of extreme-value distributions, Richard Davis who proposed the notion of asymptotic conditional independence, Thomas Mikosch for a stimulating lecture on regular variation, Phyllis Wan, Sebastian Engelke for helpful discussions and the anonymous referee for their pertinent comments. He would also like to thank the Marquise and Marquis de Amodio for their funding.

\bibliographystyle{abbrvnat}
\bibliography{orv_ref}
\end{document}